\numberwithin{equation}{section}
\newcommand{\bbE}{\mathbb{E}}
\newcommand{\bbN}{\mathbb{N}}
\newcommand{\bbP}{\mathbb{P}}
\newcommand{\bbR}{\mathbb{R}}
\newcommand{\bbZ}{\mathbb{Z}}
\newcommand{\cA}{\mathcal{A}}
\newcommand{\cD}{\mathcal{D}}
\newcommand{\cE}{\mathcal{E}}
\newcommand{\cL}{\mathcal{L}}
\newcommand{\cN}{\mathcal{N}}
\newcommand{\cO}{\mathcal{O}}
\newcommand{\cW}{\mathcal{W}}
\newcommand{\scrD}{\mathscr{D}}
\newcommand{\gvec}{\mathbf{g}}
\newcommand{\bvec}{\mathbf{b}}
\newcommand{\xvec}{\mathbf{x}}
\newcommand{\zvec}{\mathbf{z}}
\newcommand{\xivec}{\boldsymbol{\xi}}
\newcommand{\thetavec}{\boldsymbol{\theta}}
\newcommand{\Mmat}{\mathbf{M}}
\newcommand{\Rmat}{\mathbf{R}}
\newcommand{\Imat}{\mathbf{I}}
\newcommand{\norm}[2]{     \| #1       \|_{ #2 }}
\newcommand{\normiii}[2]{\vert\kern-0.25ex\vert\kern-0.25ex\vert #1 \vert\kern-0.25ex \vert\kern-0.25ex\vert_{ #2 }}
\newcommand{\Normiii}[2]{\left\vert\kern-0.25ex\left\vert\kern-0.25ex\left\vert #1 \right\vert\kern-0.25ex\right\vert\kern-0.25ex\right\vert_{ #2 }}
\newcommand{\scalar}[2]{     ( #1       )_{ #2 }}
\newcommand{\duality}[2]{     \langle #1       \rangle_{ #2 }}
\newcommand{\Laplace}{\Delta}
\newcommand{\white}{\cW}
\newcommand{\tr}{\operatorname{tr}}
\newcommand{\rd}{\mathrm{d}}
\newcommand{\from}{\colon}
\newcommand{\Nok}{N_{\mathrm{ok}}}
\newcommand{\whiteok}{\white_{\mathrm{ok}}}
\newcommand{\Hdot}[1]{\dot{H}^{#1}}
\newtheorem{lemma}{Lemma}[section]
\newtheorem{proposition}[lemma]{Proposition}
\newtheorem{theorem}[lemma]{Theorem}
\newtheorem{corollary}[lemma]{Corollary}
\theoremstyle{remark}
\newtheorem{remark}[lemma]{Remark}
\theoremstyle{definition}
\newtheorem{assumption}[lemma]{Assumption}
\newtheorem{example}[lemma]{Example}
\begin{document}
%------------------------------------------------------------------

\title[Numerics for fractional elliptic SPDEs with spatial white noise]
	{Numerical solution of fractional elliptic stochastic PDEs with spatial white noise}

%------------------------------------------------------------------
%\author[D.~Bolin]{David Bolin}
%\author[K.~Kirchner]{Kristin Kirchner}
%\author[M.~Kov\'acs]{Mih\'{a}ly Kov\'{a}cs}

\author[D.~Bolin, K.~Kirchner, and M.~Kov\'{a}cs]{David Bolin, Kristin Kirchner, and Mih\'{a}ly Kov\'{a}cs}

\address[David Bolin, Kristin Kirchner, Mih\'{a}ly Kov\'{a}cs]{
Department of Mathematical Sciences\\
Chalmers University of Technology and University of Gothenburg\\
412 96 G\"oteborg, Sweden}

%\email[\emph{corresponding author}]{kristin.kirchner@chalmers.se}
\email{{\normalfont{(K.~Kirchner, corresponding author) }}kristin.kirchner@chalmers.se}
%\email{david.bolin@chalmers.se, kristin.kirchner@chalmers.se (corresponding author), mihaly@chalmers.se}

%------------------------------------------------------------------
% Acknowledgement

\thanks{
Acknowledgement.
This work was supported in part by the
Swedish Research Council (grant Nos.~2016-04187, 2017-04274),
and the Knut and Alice Wallenberg Foundation
(KAW 20012.0067).
The authors
thank Stefan A.~Funken
for his contribution
to the FEM implementations
in {\sc Matlab}.
}

%-------------------------------------------------------------------

\begin{abstract}
The numerical approximation of solutions to
stochastic partial differential equations
with additive spatial white noise
on bounded domains in $\mathbb{R}^d$ is considered.
The differential operator is given by the
fractional power~$L^\beta$, $\beta\in(0,1)$,
of an integer order elliptic differential operator $L$
and is therefore non-local.
Its inverse
$L^{-\beta}$
is represented
by a Bochner integral
from the Dunford--Taylor functional calculus.
By applying a quadrature formula
to this integral representation,
the inverse fractional power operator $L^{-\beta}$
is approximated by a weighted sum of
non-fractional
resolvents
$(I + t_j^2 L)^{-1}$
at certain quadrature nodes $t_j>0$.
The resolvents are then discretized
in space by a
standard finite element method.

This approach is
combined with an
approximation of the
white noise,
which is based only
on the mass matrix
of the finite element discretization.
In this way,
an efficient
numerical algorithm
for computing samples
of the approximate solution
is obtained.
For the resulting approximation,
the strong mean-square error is analyzed
and an explicit rate of convergence
is derived.
Numerical experiments for $L=\kappa^2-\Delta$,
$\kappa > 0$, with homogeneous Dirichlet boundary conditions
on the unit cube $(0,1)^d$
in $d=1,2,3$ spatial dimensions
for
varying $\beta\in(0,1)$
attest the theoretical results.
\end{abstract}

\keywords{stochastic partial differential equations,
          Gaussian white noise,
          fractional operators,
          finite element methods,
          Mat\'{e}rn covariances,
          spatial statistics.}

\subjclass[2010]{Primary: 35S15, 65C30, 65C60, 65N12, 65N30.} %secondary: 35R60, 60G15, 60G60, 60H15

\date{\today}

\maketitle

%=======================================================================================

%==============================================================================
\section{Introduction}\label{section:intro}
%==============================================================================

A real-valued Gaussian random field $u$ defined on a spatial domain
$\cD \subset \bbR^d$ is called a Gaussian Mat\'{e}rn field
if its covariance function $C\from \cD\times \cD \to \bbR$ is given by
\begin{align}\label{e:materncov}
C(\xvec_1, \xvec_2) = \frac{2^{1-\nu}\sigma^2}{\Gamma(\nu)}
                      (\kappa \norm{\xvec_1 - \xvec_2}{})^{\nu}
                      K_{\nu}(\kappa\|\xvec_1 - \xvec_2\|),
                      \quad
                      \xvec_1, \xvec_2 \in \cD,
\end{align}
where $\norm{\cdot}{}$ is the Euclidean norm on $\bbR^d$ and
$\Gamma$, $K_{\nu}$ denote the gamma function
and the modified Bessel function of the second kind, respectively.
Via the positive
parameters $\sigma$, $\nu$, and $\kappa$
the most important characteristics of
the random field $u$ can be controlled:
its variance, smoothness, and correlation range.
Due to this flexibility,
Gaussian Mat\'ern fields
are often used for modeling
in spatial statistics, see, e.g., \cite{stein1999}.
However, a major drawback of
this traditional covariance-based representation of Mat\'ern fields
is its high computational effort.
For instance, sampling from $u$ at $n$ locations
$\xvec_1,\ldots,\xvec_n \in \cD$
requires a matrix factorization of an $n \times n$ covariance matrix
and, thus, in general, $\mathcal{O}(n^3)$ arithmetic operations.

There are several approaches attempting to
cope with this computational problem
\citep[see, e.g.,][]{banerjee2008gaussian, furrer2006covariance, nychka2015multiresolution, sun2012geostatistics}.
One of them is based
on the insight that
a Gaussian Mat\'ern field on $\cD := \bbR^d$
with parameters $\sigma,\nu,\kappa>0$
can be seen as the statistically stationary
solution $u$ to the
stochastic partial differential equation (SPDE)
\begin{equation}\label{e:statmodel}
        (\kappa^2 - \Laplace)^\beta \, u(\xvec) = \white(\xvec),
        \qquad
        \xvec\in\cD.
\end{equation}
Here, $\Laplace$ denotes the Laplacian,
$4\beta = 2\nu + d$,
and
$\white$ is Gaussian white noise on $\bbR^d$.
The marginal variance of $u$ is then given by
$\sigma^2 = \Gamma(2\beta - d/2)\Gamma(2\beta)^{-1}(4\pi)^{-d/2}\kappa^{d - 4\beta}$.

This relation between
Mat\'ern fields and SPDEs
had already been noticed
by \cite{whittle54,whittle63}.
Later on,
based on a finite element discretization of \eqref{e:statmodel},
where the differential operator $\kappa^2-\Delta$ is
augmented with Neumann boundary conditions,
Markov random field \emph{approximations}
of Mat\'ern fields
on \emph{bounded} domains $\cD\subsetneq \bbR^d$
were introduced by~\cite{lindgren11}.
Owing to the computational savings of this approach
compared to the covariance-based approximations,
these SPDE-based models have become very popular
in spatial statistics \citep[see, e.g.,][]{bhatt2015effect,lai2015spatial},
and they are still
subject of current research,
mainly because of
the following reason:
the SPDE formulation \eqref{e:statmodel} facilitates
various generalizations of approximations of Gaussian Mat\'ern fields
involving
\begin{enumerate*}[label=(\alph*)]
\item other differential operators \citep{bolin11, fuglstad2015, lindgren11},
\item more general domains, such as the sphere \citep{bolin11, lindgren11},
and
\item non-Gaussian driving noise \citep{bolin14, wallin15}.
\end{enumerate*}
However, a considerable drawback of the
finite element approximation proposed by~\cite{lindgren11}
is that it is
only computable if
$2\beta \in \bbN$.
This limits flexibility, and, in particular,
it implies that the method cannot be applied
to the important special case of
exponential covariance ($\nu = 1/2$) in $\mathbb{R}^2$,
which corresponds to $\beta = 3/4$.

With the objective of extending the approach of~\cite{lindgren11} to
all admissible values $\nu > 0$ (i.e., $\beta > d/4)$
and the generalizations mentioned above,
we consider~\eqref{e:statmodel}
in the more general framework
of fractional order elliptic equations
driven by white noise.
We propose an explicit numerical scheme
for generating samples of an approximation
to the Gaussian solution process,
which allows for any fractional power $\beta>d/4$.
This method is based on:
\begin{enumerate*}
\item\label{enum:method1} a standard finite element discretization in space,
\item\label{enum:method2} a quadrature approximation of the inverse fractional elliptic differential operator
    proposed by~\cite{bonito2015} for deterministic equations, and
\item\label{enum:method3} an approximation of the noise term on the right-hand side, whose
    covariance matrix after discretization is equal to the finite element mass matrix.
\end{enumerate*}
Due to~\ref{enum:method3}
no explicit knowledge of the eigenfunctions
of the differential operator is needed
in contrast to approximations
based on truncated Karhunen--Lo\`{e}ve expansions
of the noise term \cite[e.g.,][]{zhang2016}.

While \cite{zhang2016} remarked that any orthonormal basis
can be used in the Karhunen--Lo\`{e}ve expansion of the noise term
before projecting it onto the finite element space,
their error analysis strongly benefits from the fact
that the eigenfunctions of the differential operator are used.
In fact, if a general orthonormal basis was used,
%which also had to be constructed if the spatial domain has complex geometry,
it would be difficult to obtain explicit rates of convergence
with respect to the truncation level after truncating the expansion at a finite level.
Only if the constructed basis has certain smoothness
with respect to the differential operator,
it is possible to obtain explicit rates of convergence
\citep[see, e.g.,][addressing this kind of problem]{kovacs2011}.
Constructing such an orthonormal basis requires
a lot of computational effort, in particular, for complicated domains and $d=2,3$.
In contrast, the present approach is based on an expansion
with respect to the standard (non-orthonormal)
finite element basis for the discretized problem,
which is readily available even for complex geometries.

This work follows a series of investigations of
numerical methods for deterministic fractional order
equations \citep{baeumer2015, bonito2017, bonito2015, caffarelli2007, gavrilyuk1996, gavrilyuk2004, gavrilyuk2005, jin2015, nochetto2015, roop2006},
and for non-fractional elliptic SPDEs with random forcing
\citep[e.g.,][]{babuska2004, cao2007, du2003, gyongy2006, zhang2016}.
An essentially similar idea to our approach, which combines~\ref{enum:method3}
with Lanczos and Krylov subspace methods,
was persued by~\cite{simpson08phd}
for the differential operator $\kappa^2-\Delta$ in~\eqref{e:statmodel}.
However, neither weak nor strong convergence have been proven
and the empirical results show a generally poor performance of the proposed scheme.
A weak error estimate
for $\kappa^2 - \Delta$ and the non-fractional case $\beta=1$
in $d=2$ spatial dimensions has been derived
by~\cite{simpson2012think}, showing
quadratic weak convergence for that particular case.
Since the first two moments uniquely determine
the distribution of the Gaussian solution process,
an alternative approach is to
consider the problem of approximating the
covariance function of the solution
instead of solving for the solution process itself \citep{Dolz2017}.
Note that this method cannot be generalized to non-Gaussian models.

The structure of this article is as follows:
In \S\ref{section:main} we present
the fractional order equation
with Gaussian white noise
in a Hilbert space setting,
the necessary assumptions,
and comment on existence and uniqueness
of a solution to this problem.
Furthermore, we introduce
the numerical approximation
of the solution process
and state our main result in Theorem~\ref{thm:strong-conv}:
strong mean-square convergence
of this approximation
at an explicit rate.
This theorem is proven in \S\ref{section:error}
by partitioning the strong mean-square error
in several terms and
estimating these terms one by one.
In addition, a weak type convergence result is obtained in Corollary~\ref{cor:err-weak}.
In \S\ref{section:numexp} the
SPDE~\eqref{e:statmodel} is considered for
numerical experiments on
the unit cube $\cD = (0,1)^d$
with continuous, piecewise linear
finite element basis functions
in $d=1,2,3$ spatial dimensions.
The outcomes of the paper are summarized
in \S\ref{section:conclusion}.

%=======================================================================================
\section{Model problem and main result}\label{section:main}
%=======================================================================================

In the following, let $H$ denote a separable Hilbert space and
$L\from\scrD(L) \subset H \to H$ be a densely defined, self-adjoint, positive definite
linear operator with a compact inverse.
In this case, there exists an orthonormal basis $\{e_j\}_{j\in\bbN}$
of $H$ consisting of eigenvectors of $L$.
The eigenvalue-eigenvector pairs $\{(\lambda_j, e_j)\}_{j\in\bbN}$
can be arranged such that the
positive eigenvalues $\{\lambda_j\}_{j\in\bbN}$
are in nondecreasing order:
\begin{align*}
    0 < \lambda_1 \leq \lambda_2 \leq \ldots \leq \lambda_j \leq \lambda_{j+1}\leq \ldots,
    \qquad
    \lim_{j\to\infty} \lambda_j = \infty.
\end{align*}

We assume that the growth of the eigenvalues
is given by $\lambda_j \propto j^\alpha$
for an exponent $\alpha > 0$, i.e.,
there exist constants $c_\lambda, C_\lambda > 0$
such that
\begin{align}\label{e:lambdaj}
    c_\lambda \, j^\alpha \leq \lambda_j \leq C_\lambda \, j^\alpha
    \qquad \forall j\in\bbN.
\end{align}

For $\beta > 0$ and
$\phi \in \scrD(L^{\beta}) := \{ \psi\in H : \sum_{j\in\bbN} \lambda_j^{2\beta} \scalar{\psi, e_j}{H}^2 < \infty\}$
the action of the fractional power operator $L^{\beta} \from \scrD(L^{\beta}) \to H$
is defined by
\begin{align}\label{e:def:Lbeta}
    L^{\beta} \phi := \sum_{j\in\bbN} \lambda_j^{\beta} \scalar{\phi, e_j}{H} \, e_j.
\end{align}
The subspace $\Hdot{2\beta} := \scrD(L^{\beta}) \subset H$
is itself a Hilbert space
with respect to the inner product
and corresponding norm given by
\begin{align*}
    \scalar{\phi, \psi}{2\beta} := \scalar{L^{\beta} \phi, L^{\beta} \psi}{H},
    \qquad
    \norm{\phi}{2\beta}^2 := \norm{L^{\beta} \phi}{H}^2 = \sum_{j\in\bbN} \lambda_j^{2\beta} \scalar{\phi, e_j}{H}^2.
\end{align*}
Its dual space after identification via the inner product
on $H$ is denoted by $\Hdot{-2\beta}$.
For $s \geq 0$, the norm on the dual space $\Hdot{-s}$
enjoys the useful representation
\begin{align}\label{e:norm:H-s}
    \norm{g}{-s}
    =
    \sup\limits_{\phi\in\Hdot{s}\setminus\{0\}}
    \frac{\duality{g, \phi}{}}{\norm{\phi}{s}}
    =
    \biggl( \sum_{j\in\bbN} \lambda_j^{-s} \duality{g, e_j}{}^2 \biggr)^{1/2} ,
\end{align}
where $\duality{\cdot,\cdot}{}$ denotes
the duality pairing between $\Hdot{-s}$ and
$\Hdot{s}$ \citep[][Proof~of Lem.~5.1]{thomee2007}.
We obtain the following scale of
densely and continuously embedded Hilbert spaces:
\begin{align}\label{e:Hs-embed}
    \Hdot{s}
    \hookrightarrow
    \Hdot{r}
    \hookrightarrow
    \Hdot{0}
    :=
    H
    \cong
    \Hdot{-0}
    \hookrightarrow
    \Hdot{-r}
    \hookrightarrow
    \Hdot{-s},
    \qquad
    0\leq r \leq s.
\end{align}

It is an immediate consequence of these definitions that
$L^\beta$ is an isometric isomorphism
from $\Hdot{s}$ to $\Hdot{s-2\beta}$ for $s\geq 2\beta$,
since for $\phi \in \Hdot{s}$ it holds
\begin{align}\label{e:Lbeta:iso}
    \norm{L^\beta \phi}{s-2\beta}^2
        &= \Bigl\| \sum_{j\in\bbN} \lambda_j^\beta \scalar{\phi, e_j}{H} \, e_j \Bigr\|_{s-2\beta}^2
         = \sum_{j\in\bbN} \lambda_j^{s-2\beta} \lambda_j^{2\beta}  \scalar{\phi, e_j}{H}^2 = \norm{\phi}{s}^2.
\end{align}

The following lemma states that
$L^\beta$ can be extended to
a bounded linear operator between $\Hdot{s}$
and $\Hdot{s-2\beta}$ for all $s\in\bbR$.

\begin{lemma}\label{lem:isometry}
For $s\in\bbR$, there exists a unique continuous extension of
$L^\beta$ defined in~\eqref{e:def:Lbeta}
to an isometric isomorphism
$L^\beta \from \Hdot{s} \to \Hdot{s-2\beta}$.
\end{lemma}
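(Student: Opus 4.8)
The plan is to reduce the statement to a density argument combined with the fact that $L^\beta$ acts diagonally on the eigenbasis. Write $V := \operatorname{span}\{e_j : j\in\bbN\}$ for the linear span of the eigenvectors. First I would record that $V$ is dense in $\Hdot{s}$ for \emph{every} $s\in\bbR$: for $s\geq 0$ this is immediate from the series representation $\norm{\phi}{s}^2 = \sum_{j}\lambda_j^s\scalar{\phi,e_j}{H}^2$, and for $s<0$ it follows by combining the density of $\Hdot{0}=H$ in $\Hdot{s}$ (part of the embedding scale~\eqref{e:Hs-embed}) with the density of $V$ in $H$. On $V$ the operator $L^\beta$ of~\eqref{e:def:Lbeta} is unambiguously defined (since $V\subset\Hdot{2\beta}$), it maps $V$ \emph{onto} $V$ because $L^\beta e_j=\lambda_j^\beta e_j$ with $\lambda_j^\beta\neq 0$, and for a finite sum $\phi=\sum_j c_j e_j\in V$ the norm formulas (including~\eqref{e:norm:H-s} when $s<2\beta$) yield, for every $s\in\bbR$,
\[
  \norm{L^\beta\phi}{s-2\beta}^2
  = \sum_j \lambda_j^{s-2\beta}\bigl(\lambda_j^\beta c_j\bigr)^2
  = \sum_j \lambda_j^{s} c_j^2
  = \norm{\phi}{s}^2 .
\]
Hence $L^\beta|_V\from (V,\norm{\cdot}{s})\to(V,\norm{\cdot}{s-2\beta})$ is a linear isometry.

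Next I would invoke the standard extension theorem for bounded linear maps. Since $V$ is dense in $\Hdot{s}$ and the target $\Hdot{s-2\beta}$ is complete (it is a Hilbert space by~\eqref{e:Hs-embed}), the isometry $L^\beta|_V$ extends uniquely to a bounded linear operator $L^\beta\from\Hdot{s}\to\Hdot{s-2\beta}$; uniqueness of the continuous extension is part of this theorem and uses only density of $V$, and the extension is again an isometry because the norm is continuous. For surjectivity I would argue that the range of an isometry defined on the complete space $\Hdot{s}$ is itself complete, hence closed in $\Hdot{s-2\beta}$; as this range contains $L^\beta(V)=V$, which is dense in $\Hdot{s-2\beta}$ by the first step, it must equal $\Hdot{s-2\beta}$. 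Therefore $L^\beta$ is an isometric isomorphism, as claimed. For $s\geq 2\beta$ this merely re-derives~\eqref{e:Lbeta:iso}, so the actual content lies in the range $s<2\beta$, but the argument above is uniform in $s$.

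The only step requiring genuine care — the one I would write out in full — is the density of $V$ in the negative-order spaces $\Hdot{s}$, $s<0$: one must use~\eqref{e:Hs-embed} in the correct direction (density of the space with the larger index in the one with the smaller index) and check that viewing $e_j\in H$ as an element of $\Hdot{s}$ is compatible with the duality pairing $\duality{\cdot,\cdot}{}$ in~\eqref{e:norm:H-s}, so that $\duality{\phi,e_j}{}=\scalar{\phi,e_j}{H}$ for $\phi\in H$. Once this bookkeeping is settled the proof closes with no further estimates. As an alternative, one may identify $\Hdot{s}$ with $\ell^2(\bbN)$ isometrically via $\phi\mapsto(\lambda_j^{s/2}\duality{\phi,e_j}{})_{j\in\bbN}$ for all $s\in\bbR$ and observe that under these identifications $L^\beta$ becomes the identity map of $\ell^2(\bbN)$; this makes the isometric-isomorphism statement transparent, but establishing surjectivity of the identification maps again rests on the same density fact.
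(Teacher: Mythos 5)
Your proof is correct, but it follows a somewhat different route than the paper. The paper treats $s\geq 2\beta$ directly via~\eqref{e:Lbeta:iso} and, for $s<2\beta$, extends $L^\beta$ from the dense subspace $\Hdot{2\beta}\subset\Hdot{s}$: boundedness comes from the duality estimate $\duality{L^\beta\phi,\psi}{}\leq\norm{\phi}{s}\norm{\psi}{2\beta-s}$, the isometry from exhibiting the extremal element $\psi=L^{s-\beta}\phi$ attaining equality, and surjectivity from the dual-norm representation~\eqref{e:norm:H-s}. You instead extend from the eigenvector span $V$, compute the isometry directly on finite eigen-expansions (uniformly in $s\in\bbR$, using~\eqref{e:norm:H-s} only to evaluate negative-order norms of elements of $V$), and get surjectivity from the closed-range-of-an-isometry argument combined with density of $V=L^\beta(V)$ in $\Hdot{s-2\beta}$. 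What your approach buys is uniformity in $s$ and the avoidance of both the duality-pairing estimate and the slightly delicate extremal choice $\psi=L^{s-\beta}\phi$ (which in the paper already presupposes a meaning for $L^{s-\beta}$ on $\Hdot{s}$); what it costs is exactly the bookkeeping you flag, namely density of $V$ in the negative-order spaces and the identification $\duality{\phi,e_j}{}=\scalar{\phi,e_j}{H}$ for $\phi\in H$, both of which are immediate from~\eqref{e:Hs-embed} and~\eqref{e:norm:H-s}. The one point you should state explicitly to match the wording of the lemma is that your extension of $L^\beta|_V$ agrees with the operator~\eqref{e:def:Lbeta} on all of $\scrD(L^\beta)=\Hdot{2\beta}$: for $\phi\in\Hdot{2\beta}$ the truncations $\phi_N=\sum_{j\leq N}\scalar{\phi,e_j}{H}e_j$ converge to $\phi$ in $\Hdot{2\beta}$, hence in $\Hdot{s}$, while $L^\beta\phi_N\to L^\beta\phi$ in $H$, hence in $\Hdot{s-2\beta}$, so the two operators coincide there; with that one line added, your argument is complete and also yields the uniqueness claim, since any continuous extension of~\eqref{e:def:Lbeta} is in particular a continuous extension of $L^\beta|_V$.
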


\begin{proof}
For $s \geq 2\beta$ the isometry property
and, thus, injectivity has already been
observed in~\eqref{e:Lbeta:iso}.
Surjectivity readily follows,
since for any $g \in\Hdot{s-2\beta}$
the vector
$\phi:=\sum_{j\in\bbN} \lambda_j^{-\beta} \scalar{g, e_j}{H} \, e_j$
is an element of $\Hdot{s}$ with
$\norm{\phi}{s}=\norm{g}{s-2\beta}$
and $L^\beta \phi = g$.

Assume now that $s < 2\beta$. We obtain for
$\phi \in \Hdot{2\beta}$ and $\psi \in \Hdot{2\beta-s}$
\begin{align*}
    \duality{L^\beta \phi, \psi}{}
    =
    \scalar{L^\beta \phi, \psi}{H}
        = \sum_{j\in\bbN} \lambda_j^{\beta} \scalar{\phi, e_j}{H} \scalar{\psi, e_j}{H}
        \leq \norm{\phi}{s} \norm{\psi}{2\beta - s}.
\end{align*}
By density of $\Hdot{2\beta}\hookrightarrow\Hdot{s}$,
it follows that there exists a unique linear continuous extension
$L^\beta \from \Hdot{s} \to \Hdot{s-2\beta}$.
Moreover, it is an isometry, since
the above estimate attains equality
for $\phi\in\Hdot{s}$ and
$\psi = L^{s-\beta} \phi \in \Hdot{2\beta-s}$.
Surjectivity follows
similarly
to $s\geq 2\beta$
from the representation
of the dual norm
in~\eqref{e:norm:H-s}
applied to~$\Hdot{s-2\beta}$.
\end{proof}

\begin{example}\label{ex:eig:laplace}
For $\kappa \geq 0$ and a bounded, convex, polygonal domain $\cD\subset\bbR^d$,
consider the eigenvalue value problem
\begin{align*}
    \hspace*{4cm}
        (\kappa^2 - \Delta) e &= \lambda e && \text{in } \cD,  \hspace*{4cm} \\
        e &= 0 && \text{on }\partial \cD,
\end{align*}
i.e., the
operator $L = \kappa^2 - \Delta$
with homogeneous Dirichlet boundary conditions
on $H=L_2(\cD)$.
For this case, we have
$\Hdot{2}=\scrD(L) = H^2(\cD) \cap H^1_0(\cD)$,
$\Hdot{1} = H^1_0(\cD)$,
as well as
$\Hdot{-1} = H^1_0(\cD)^* = H^{-1}(\cD)$,
where$\,\,^*$ denotes the dual
after identification
via the inner product on $L_2(\cD)$.
For $s\in(0,1)$ one obtains the intermediate spaces
\begin{align*}
    \Hdot{s}  &= H^s_0(\cD), &  H^s_0(\cD) &:= [ L_2(\cD), H^1_0(\cD)]_{s,2}, \\
    \Hdot{-s} &= H^{-s}(\cD), &  H^{-s}(\cD) &:= [H^{-1}(\cD), L_2(\cD)]_{1-s,2} = H^s_0(\cD)^*,
\end{align*}
where
$[\cdot,\cdot]_{s,q}$ denotes the real $K$-interpolation method
\citep[see, e.g.,][Ch.~19]{thomee2007}.
Furthermore, one can show \citep[][Prop.~4.1]{bonito2015}
that for $1\leq s \leq 2$ it holds that
\begin{align*}
    \Hdot{s} = [\Hdot{1}, \Hdot{2}]_{s-1,2} = [H^1_0(\cD), H^2(\cD) \cap H^1_0(\cD)]_{s-1,2} = H^s(\cD) \cap H^1_0(\cD).
\end{align*}

If $\cD=(0,1)^d$ is the $d$-dimensional unit cube,
then it is well-known
\citep[e.g.,][Ch.~VI.4]{courant1962}
that the above eigenvalue problem
has the following eigenvectors
\begin{align}\label{e:eigfct:laplace}
    e_{\thetavec}(\xvec)
        = e_{(\theta_1,\ldots,\theta_d)}(x_1,\ldots,x_d)
        = \prod_{i=1}^{d} \left( \sqrt{2} \sin(\pi\theta_i x_i) \right),
\end{align}
where $\thetavec=(\theta_1,\ldots,\theta_d)\in\bbN^d$
is a $d$-dimensional multi-index.
The corresponding eigenvalues
are given by
\begin{align}\label{e:eigval:laplace}
    \lambda_{\thetavec} = \kappa^2 + \pi^2 |\thetavec|^2 = \kappa^2 + \pi^2 \sum_{i=1}^{d} \theta_i^2.
\end{align}
These eigenvalues satisfy~\eqref{e:lambdaj} for $\alpha = 2/d$.
Note that only the values of $c_\lambda$ and $C_\lambda$ in~\eqref{e:lambdaj} change
when considering any other bounded domain $\cD\subset\bbR^d$
with smooth or polygonal boundary. The value $\alpha=2/d$ is the same
by the min-max principle.
\end{example}

\subsection{Fractional order equation}\label{subsec:fraceq}

Motivated by~\eqref{e:statmodel}
we consider for
$g\in H$ the
fractional order equation
\begin{align}\label{e:Lbeta}
    L^\beta u = g + \white,
\end{align}
where $\white$ denotes Gaussian
white noise defined on a complete probability space
$(\Omega, \cA, \bbP)$ with values in the separable Hilbert space $H$.
We assume that $\beta\in(0,1)$
and refer to Remark~\ref{rem:beta>1}
for a discussion of the generalization to $\beta > 0$.
Equation~\eqref{e:Lbeta} as well as all following
equalities involving noise terms are understood to hold
$\bbP$-almost surely ($\bbP$-a.s.).

Note that the white noise $\white$ can formally be represented
by the Karhunen--Lo\`{e}ve expansion
with respect to the orthonormal eigenbasis $\{e_j\}_{j\in\bbN} \subset H$
of $L$:
\begin{align}\label{e:kl}
    \white = \sum_{j\in\bbN} \xi_j \, e_j,
\end{align}
where $\{\xi_j\}_{j\in\bbN}$ is a sequence of
independent real-valued
standard normally distributed random variables.
As we will show in Proposition~\ref{prop:whitereg},
this formally defined series
converges in $L_2(\Omega; \Hdot{-s})$
for any $s>1/\alpha$, which implies that
realizations of $\white$ are
elements of $\Hdot{-\frac{1}{\alpha}-\epsilon}$ for any $\epsilon>0$ $\bbP$-a.s.
Related to this representation, we introduce for $N\in\bbN$
the truncated white noise
\begin{align}\label{e:klN}
    \white_N := \sum_{j=1}^N \xi_j \, e_j.
\end{align}

The following proposition specifies mean-square regularity
of the white noise
with respect to the $\dot{H}$-spaces in~\eqref{e:Hs-embed}.

\begin{proposition}\label{prop:whitereg}
For all $s\geq 0$, there exists a constant
$C > 0$ depending only
on $\alpha$, $c_\lambda$
in~\eqref{e:lambdaj}, and $s$, such that
the truncated white noise $\white_N$ in~\eqref{e:klN}
satisfies
\begin{align*}
    \bbE\bigl[ \norm{\white_N}{-s}^2 \bigr]
    \leq
    C %(c_\lambda,\alpha,s)
        \begin{cases}
            1 + N^{1-\alpha s}  & s \neq \alpha^{-1}, \\
            1 + \ln(N)          & s =    \alpha^{-1}.
        \end{cases}
\end{align*}
Furthermore, it holds
$\white \in L_2\bigl(\Omega; \Hdot{-\frac{1}{\alpha} - \epsilon}\bigr)$
for all $\epsilon>0$ with
\begin{align*}
    \bbE\bigl[ \norm{\white}{-\frac{1}{\alpha} - \epsilon}^2 \bigr]
    \leq
    c_\lambda^{-\frac{1}{\alpha} - \epsilon} \left( 1 + \tfrac{1}{\epsilon\alpha} \right).
\end{align*}
\end{proposition}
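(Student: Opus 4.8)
The plan is to reduce everything to an integral estimate of a $p$-series. Since $\white_N = \sum_{j=1}^N \xi_j e_j \in H \hookrightarrow \Hdot{-s}$ and $\duality{\white_N, e_k}{} = \scalar{\white_N, e_k}{H} = \xi_k$ for $k\leq N$ while the pairing vanishes for $k>N$, the dual-norm representation~\eqref{e:norm:H-s} applied with $g=\white_N$ gives, pointwise in $\Omega$, the finite sum $\norm{\white_N}{-s}^2 = \sum_{j=1}^N \lambda_j^{-s}\xi_j^2$. Taking expectations and using $\bbE[\xi_j^2]=1$ yields the exact identity $\bbE\bigl[\norm{\white_N}{-s}^2\bigr] = \sum_{j=1}^N \lambda_j^{-s}$.

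Next I would insert the lower eigenvalue bound $\lambda_j \geq c_\lambda j^\alpha$ from~\eqref{e:lambdaj} to get $\bbE\bigl[\norm{\white_N}{-s}^2\bigr] \leq c_\lambda^{-s}\sum_{j=1}^N j^{-\alpha s}$ and then bound $\sum_{j=1}^N j^{-\alpha s} \leq 1 + \int_1^N x^{-\alpha s}\,\rd x$, distinguishing three cases. If $\alpha s < 1$ the integral equals $(1-\alpha s)^{-1}(N^{1-\alpha s}-1)$, so the sum is $\leq \max\{1,(1-\alpha s)^{-1}\}\,(1+N^{1-\alpha s})$. If $\alpha s > 1$ the integral is $\leq (\alpha s -1)^{-1}$, the series converges, and simultaneously $N^{1-\alpha s}\leq 1$, so the bound $C(1+N^{1-\alpha s})$ holds a fortiori. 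If $\alpha s = 1$ the integral is $\ln N$, giving $1+\ln N$. Collecting the three regimes shows that a constant $C$ of the asserted form — depending only on $\alpha$, $c_\lambda$, and $s$ — works, which proves the first claim.

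For the second assertion I would specialize to $s = \tfrac1\alpha + \epsilon$, so that $\alpha s = 1 + \alpha\epsilon > 1$. The identity above shows $\bbE\bigl[\norm{\white_M - \white_N}{-s}^2\bigr] = \sum_{j=N+1}^M \lambda_j^{-s}$, which is the tail of the convergent series $\sum_{j\in\bbN}\lambda_j^{-s} \leq c_\lambda^{-s}\sum_{j\in\bbN} j^{-1-\alpha\epsilon} < \infty$; hence $\{\white_N\}_{N\in\bbN}$ is Cauchy in $L_2(\Omega;\Hdot{-s})$, the formal expansion~\eqref{e:kl} converges there and defines $\white$ as an $\Hdot{-s}$-valued random variable. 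Passing the uniform bound $\bbE\bigl[\norm{\white_N}{-s}^2\bigr] = \sum_{j=1}^N\lambda_j^{-s} \leq c_\lambda^{-s}\bigl(1 + \int_1^\infty x^{-1-\alpha\epsilon}\,\rd x\bigr) = c_\lambda^{-s}\bigl(1+\tfrac{1}{\alpha\epsilon}\bigr)$ to the limit (by monotone convergence in $N$, or by continuity of the norm) gives $\bbE\bigl[\norm{\white}{-\frac1\alpha-\epsilon}^2\bigr] \leq c_\lambda^{-\frac1\alpha-\epsilon}\bigl(1+\tfrac{1}{\epsilon\alpha}\bigr)$, and the $\bbP$-a.s.\ statement about realizations follows by taking an a.s.-convergent subsequence.

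I do not anticipate a serious obstacle: the core is just the integral comparison for $\sum j^{-\alpha s}$. The only points needing care are (i) organizing the case distinction so that a single constant absorbs all three regimes while still depending only on $\alpha, c_\lambda, s$ (the borderline $\alpha s = 1$ must be handled with the $\ln$-bound, not the formula $|1-\alpha s|^{-1}$), and (ii) justifying rigorously that the formally defined $\white$ in~\eqref{e:kl} is a bona fide $\Hdot{-s}$-valued random variable, for which the Cauchy argument in $L_2(\Omega;\Hdot{-s})$ suffices.
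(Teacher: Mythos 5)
Your proposal is correct and follows essentially the same route as the paper: the exact identity $\bbE[\norm{\white_N}{-s}^2]=\sum_{j=1}^N\lambda_j^{-s}$ via the dual-norm representation, the bound $\lambda_j\geq c_\lambda j^\alpha$, the integral comparison with the case split at $\alpha s=1$, and then the choice $s=\alpha^{-1}+\epsilon$ with $N\to\infty$ for the regularity of $\white$. Your explicit Cauchy argument in $L_2(\Omega;\Hdot{-s})$ merely spells out what the paper leaves implicit in ``taking the limit.''
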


\begin{proof}
The orthonormality of the vectors $\{e_j\}$
along with the distribution of the random variables $\{\xi_j\}$
in the expansion of $\white_N$
and the representation~\eqref{e:norm:H-s} of the dual norm
yield
$\bbE\bigl[ \norm{\white_N}{-s}^2 \bigr] = \sum_{j=1}^N \lambda_j^{-s}$.
Thus, we conclude with~\eqref{e:lambdaj}
\begin{align*}
    \bbE[ \norm{\white_N}{-s}^2 ] %&= \sum_{j=1}^N \lambda_j^{-s}
        \leq c_\lambda^{-s} \sum_{j=1}^N j^{-\alpha s}
        \leq c_\lambda^{-s} \Bigl( 1 + \int_1^N x^{-\alpha s} \, \rd x \Bigr)
        = c_\lambda^{-s}
        \begin{cases}
            \frac{N^{1-\alpha s} - \alpha s}{1-\alpha s} & s\neq \alpha^{-1}, \\
            1 + \ln(N) & s = \alpha^{-1}.
        \end{cases}
\end{align*}
Choosing $s=\alpha^{-1} + \epsilon$ and taking the limit $N\to\infty$ shows
the $L_2\bigl(\Omega; \Hdot{-\frac{1}{\alpha}-\epsilon}\bigr)$ regularity of $\white$.
\end{proof}

\begin{remark}\label{rem:solreg}
Proposition~\ref{prop:whitereg} implies that
$\white \in \Hdot{-\frac{1}{\alpha}-\epsilon}$
holds also $\bbP$-a.s.~for any $\epsilon>0$.
Therefore, existence and uniqueness of a solution $u$ to~\eqref{e:Lbeta}
with regularity $u\in \Hdot{2\beta - \frac{1}{\alpha} - \epsilon}$ ($\bbP$-a.s.)
follow from Lemma~\ref{lem:isometry}.
In addition, the above results show
that $u\in L_2\bigl(\Omega; \Hdot{2\beta - \frac{1}{\alpha} - \epsilon}\bigr)$.
In particular, $u\in L_2(\Omega; H)$ holds
if $2\alpha\beta > 1$.
\end{remark}

\begin{remark}\label{rem:refreg}
The above results are in accordance with Lemma~4.2 and Theorem~4.3 by~\cite{zhang2016},
where the following semilinear elliptic stochastic boundary value problem is considered
on $\cD:=(0,1)^d$ for $d\in\{1,2,3\}$, $g\in L_2(\cD)$, and $f\from\bbR\to\bbR$:
\begin{align*}
    \hspace*{2cm} -\Delta u(\xvec) + f(u(\xvec)) &= g(\xvec) + \white(\xvec), && \xvec\in \cD, \hspace*{2cm} \\
    \hspace*{2cm} u(\xvec) &= 0, && \xvec\in\partial \cD, \hspace*{2cm}
\end{align*}
and mean-square regularity in the Sobolev space $H^s(\cD)$
is proven under appropriate assumptions on the nonlinearity $f$ for
\begin{enumerate*}
    \item the spatial white noise $\white$ and $s < -d/2$ and
    \item the solution $u$ and $s < 2 - d/2$.
\end{enumerate*}
Note that in the linear case when $f\equiv \kappa^2 \geq 0$, this corresponds to
Problem \eqref{e:Lbeta} with
$\beta=1$, $L=\kappa^2-\Delta$, and the exponent $\alpha$
of the eigenvalue growth in~\eqref{e:lambdaj}
is given by $\alpha = 2/d$, see Example~\ref{ex:eig:laplace}.
\end{remark}

\subsection{Finite element approximation}\label{subsec:FEM}
In the following, we introduce a numerical method based on a finite element discretization
for solving the fractional order equation~\eqref{e:Lbeta}
approximately. For this purpose, we consider the family
$(V_h)_{h\in(0,1)}$ of
subspaces of $\Hdot{1}$
with finite dimensions
$N_h := \dim(V_h) < \infty$.
The %discrete operator of
Galerkin discretization of the operator
$L\from \Hdot{1} \to \Hdot{-1}$
is denoted
by $L_h \from V_h \to V_h$, i.e.,
$\scalar{L_h\psi_h,\phi_h}{H} = \duality{L\psi_h,\phi_h}{}$
for all $\psi_h,\phi_h\in V_h$.
For $g\in H$, the finite element approximation
of $v = L^{-1}g$ is then given by
$v_h = L_h^{-1} (\Pi_h g)$, where
$\Pi_h\from H \to V_h$ denotes the $H$-orthogonal projection
onto the finite element space $V_h$, i.e.,
\begin{align*}
    \duality{L v_h, \phi_h}{}
    =
    \scalar{L_h v_h, \phi_h}{H}
    =
    \scalar{\Pi_h g, \phi_h}{H}
    =
    \scalar{g, \phi_h}{H}
    \quad
    \forall \phi_h \in V_h.
\end{align*}
The eigenvalues $\{\lambda_{j,h}\}_{j=1}^{N_h}$
as well as the corresponding $H$-orthonormal eigenvectors
$\cE := \{e_{j,h}\}_{j=1}^{N_h}$
of $L_h$ % \from V_h \to V_h$
satisfy
the variational equalities
\begin{align}\label{e:eig-h}
    \scalar{ L_h e_{j,h}, \phi_h }{H} = \lambda_{j,h} \scalar{e_{j,h}, \phi_h}{H}
    \quad
    \forall \phi_h \in V_h,
    \
    1\leq j \leq N_h.
\end{align}
These eigenvalues are again arranged in nondecreasing order:
\begin{align*}
    0 < \lambda_{1,h} \leq \lambda_{2,h} \leq \ldots \leq \lambda_{N_h ,h}.
\end{align*}

Further assumptions
on the approximation properties
of finite element spaces
are specified below.
\begin{assumption}\label{ass:Vh}
The family $(V_h)_{h\in(0,1)}$ % \subset \Hdot{1}$
of finite-dimensional subspaces of $\Hdot{1}$
satisfies the following:
\begin{enumerate}
\item\label{ass:Vh-1} there exists $d\in\bbN$ such that $N_h = \dim(V_h) \propto h^{-d}$ for all $h > 0$;
\item\label{ass:Vh-2} there exist constants $C_1, C_2 > 0$, $h_0\in(0,1)$,
    as well as exponents $r,s > 0$, and $q>1$
    such that $\{\lambda_{j,h}\}$ and $\{e_{j,h}\}$ in~\eqref{e:eig-h} satisfy
    \begin{align}
        \lambda_j \leq \lambda_{j,h} &\leq \lambda_j + C_1 h^r \lambda_j^q, \label{e:ass-lambdajh} \\
        \norm{e_j - e_{j,h}}{H}^2    &\leq C_2 h^{2s} \lambda_j^q  \label{e:ass-ejh}
    \end{align}
    for all $h\in(0,h_0)$ and $j\in\{1,\ldots,N_h\}$.
\end{enumerate}
\end{assumption}

The following example illustrates
that Assumption~\ref{ass:Vh} is,
in general, satisfied for $d$-dimensional
elliptic linear differential operators.

\begin{example}\label{ex:ass:Vh}
Let $\cD\subset \bbR^d$ be a bounded, convex, polygonal domain
and
assume that $L\from\scrD(L)\subset L_2(\cD) \to L_2(\cD)$
is a (strongly) elliptic linear differential operator of order $2m\in\bbN$, i.e.,
there exists a constant $\gamma > 0$ such that
\begin{align*}
    \duality{Lv, v}{} \geq \gamma\norm{v}{H^m(\cD)}^2
    \quad
    \forall v\in V,
\end{align*}
where $V = H^m(\cD)$ or $V=H_0^1(\cD) \cap H^m(\cD)$,
and $\duality{\cdot, \cdot}{}$ denotes the duality
pairing between $V$ and its dual $V^*$
after identification via the inner product on $L_2(\cD)$.
Assume that $V_h \subset V$ is
an admissible finite element space
of polynomial degree $p\in\bbN$
with respect
to a regular mesh on $\bar{\cD}$.
In this case, Assumption~\ref{ass:Vh} is satisfied
\citep[][Thms.~6.1~\&~6.2]{strang2008}
for $H=L_2(\cD)$, $\Hdot{1} = V$,
and the exponents
\begin{align*}
    r = 2(p+1-m), \quad
    s = \min\{p+1, 2(p+1-m)\},  \quad \text{and} \quad
    q = \tfrac{p+1}{m}.
\end{align*}

In particular,
%if $V_h$ is the finite element space
%of continuous, piecewise linear functions,
if the family of finite element spaces $V_h$ is quasi-uniform and has
continuous, piecewise linear basis functions,
we have for $L= \kappa^2 - \Delta$ with homogeneous Dirichlet
boundary conditions
in Example~\ref{ex:eig:laplace} that
$r=s=q=2$.
\end{example}

In order to approximate the white noise $\white$
on the finite element space $V_h$
we introduce the following $V_h$-valued
random variables:
\begin{enumerate}
\item an expansion with respect to the discrete eigenbasis $\cE$:
    \begin{align}\label{e:kl-Lh}
        \white_{h}^{\cE} := \sum_{j=1}^{N_h} \xi_j \, e_{j,h},
    \end{align}
    where $\xivec := \left( \xi_1,\ldots,\xi_{N_h} \right)^{T}$
    is the vector of the first $N_h$ independent standard normally distributed random variables
    in expansion~\eqref{e:kl} of $\white$;
\item an expansion with respect to any basis $\Phi:=\{\phi_{j,h}\}_{j=1}^{N_h}$ of $V_h$:
    \begin{align}\label{e:kl-Vh}
        \white_{h}^{\Phi} := \sum_{j=1}^{N_h} \widetilde{\xi_j} \, \phi_{j,h},
    \end{align}
    where the random vector
    $\widetilde{\xivec} := ( \widetilde{\xi_1},\ldots, \widetilde{\xi}_{N_h} )^{T}$
    is given by
    \begin{align*}
        \widetilde{\xivec} = \Rmat^{-1} \xivec,
        \qquad
        R_{ij} := \scalar{e_{i,h}, \phi_{j,h}}{H},
        \quad
        1\leq i,j \leq N_h.
    \end{align*}
    It is therefore Gaussian distributed with zero mean and covariance matrix
    $\Rmat^{-1}(\Rmat^{-1})^T = (\Rmat^T \Rmat)^{-1} = \Mmat^{-1}$, where $\Mmat = ( \scalar{\phi_{i,h}, \phi_{j,h}}{H} )_{i,j=1}^{N_h}$
    is the mass matrix with respect to the basis $\Phi$ of the finite element space $V_h$.
\end{enumerate}

The following lemma shows
that the above approximations of the white noise
are equal in mean-square sense.

\begin{lemma}\label{lem:white-equiv}
The noise approximations
$\white_h^\cE$ and $\white_h^\Phi$
in~\eqref{e:kl-Lh}--\eqref{e:kl-Vh} are equivalent
in $L_2(\Omega; H)$, i.e.,
$\norm{\white_h^\cE - \white_h^\Phi}{L_2(\Omega; H)} = 0$.
\end{lemma}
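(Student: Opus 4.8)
The plan is to show that $\white_h^\cE$ and $\white_h^\Phi$ are literally the same $V_h$-valued random variable, $\bbP$-a.s., which trivially gives the claimed equality of $L_2(\Omega;H)$-norms. The key observation is that both expansions are finite sums of elements of the finite-dimensional space $V_h$, so it suffices to compare their coefficients with respect to a single fixed basis. I would choose the basis $\cE = \{e_{j,h}\}_{j=1}^{N_h}$ of $V_h$, which is $H$-orthonormal by construction, and rewrite $\white_h^\Phi$ in terms of $\cE$.

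First I would expand each $\phi_{j,h}\in V_h$ in the orthonormal basis $\cE$: since $\scalar{e_{i,h}, \phi_{j,h}}{H} = R_{ij}$, we have $\phi_{j,h} = \sum_{i=1}^{N_h} R_{ij}\, e_{i,h}$. Substituting this into the definition \eqref{e:kl-Vh} of $\white_h^\Phi$ and interchanging the (finite) sums gives
\begin{align*}
    \white_h^\Phi
    = \sum_{j=1}^{N_h} \widetilde{\xi_j} \sum_{i=1}^{N_h} R_{ij}\, e_{i,h}
    = \sum_{i=1}^{N_h} \biggl( \sum_{j=1}^{N_h} R_{ij}\, \widetilde{\xi_j} \biggr) e_{i,h}
    = \sum_{i=1}^{N_h} (\Rmat \widetilde{\xivec})_i \, e_{i,h}.
\end{align*}
By the defining relation $\widetilde{\xivec} = \Rmat^{-1}\xivec$, we obtain $\Rmat\widetilde{\xivec} = \xivec$, hence the $i$-th coefficient equals $\xi_i$. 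Therefore $\white_h^\Phi = \sum_{i=1}^{N_h} \xi_i\, e_{i,h} = \white_h^\cE$ as elements of $V_h$, and this identity holds for every $\omega\in\Omega$ (or at least $\bbP$-a.s., since $\Rmat$ is invertible and the $\xi_i$ are fixed random variables). Consequently $\white_h^\cE - \white_h^\Phi = 0$ pointwise, so $\bbE\bigl[\norm{\white_h^\cE - \white_h^\Phi}{H}^2\bigr] = 0$, which is the assertion.

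There is essentially no obstacle here; the only point that requires a word of care is the invertibility of $\Rmat$, so that $\widetilde{\xivec}$ is well-defined and the manipulation $\Rmat\Rmat^{-1}\xivec = \xivec$ is legitimate. This follows because $\cE$ and $\Phi$ are both bases of the same space $V_h$, so $\Rmat$ is the (transpose of the) change-of-basis matrix between them and is therefore nonsingular; equivalently, $\Rmat^T\Rmat = \Mmat$ is the mass matrix of the basis $\Phi$, which is symmetric positive definite. Everything else is a rearrangement of finite sums, so no convergence or measurability subtleties arise.
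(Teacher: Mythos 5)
Your proof is correct, and it takes a genuinely different route from the paper's. The paper proves the lemma at the level of second moments: it expands $\bbE\bigl[\norm{\white_h^\cE - \white_h^\Phi}{H}^2\bigr]$ into cross terms, inserts the covariances $\bbE[\xi_i\xi_j]=\delta_{ij}$, $\bbE[\widetilde{\xi_i}\widetilde{\xi_j}]=[\Mmat^{-1}]_{ij}$, $\bbE[\widetilde{\xi_i}\xi_j]=[\Rmat^{-1}]_{ij}$ together with the Gram matrices of the two bases, and checks that the resulting trace identity $\tr(\Imat)+\tr(\Mmat^{-1}\Mmat)-2\tr(\Rmat^{-1}\Rmat)$ vanishes. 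You instead show the stronger statement that $\white_h^\Phi$ and $\white_h^\cE$ coincide pathwise: expanding $\phi_{j,h}=\sum_i R_{ij}e_{i,h}$ in the $H$-orthonormal basis $\cE$ and using $\Rmat\widetilde{\xivec}=\xivec$ (legitimate since $\Rmat^T\Rmat=\Mmat$ is symmetric positive definite, so $\Rmat$ is nonsingular) identifies $\white_h^\Phi$ as the same element of $V_h$ re-expanded in a different basis, for every realization of $\xivec$. What each approach buys: yours is more elementary (pure linear algebra, no expectations), makes transparent \emph{why} the two approximations agree — they are one and the same $V_h$-valued random variable — and immediately yields equality in any norm, not just $L_2(\Omega;H)$; the paper's computation stays entirely at the level of first and second moments and trace identities, which is the form in which the result is actually reused in the error analysis (e.g.\ in~\eqref{e:uh-equiv}) and matches the covariance-based way the scheme is implemented. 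Your handling of the only delicate point, the invertibility of $\Rmat$, is exactly right.
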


\begin{proof}
Inserting the definitions~\eqref{e:kl-Lh}--\eqref{e:kl-Vh}
of $\white_h^\cE$ and $\white_h^\Phi$ yields
\begin{align*}
    \bbE\bigl[ \norm{\white_h^\cE - \white_h^\Phi}{H}^2 \bigr]
    &=
    \sum_{i=1}^{N_h} \sum_{j=1}^{N_h}
            \Bigl( \bbE[\xi_i \xi_j] \scalar{ e_{i,h}, e_{j,h} }{H}
                    + \bbE\bigl[\widetilde{\xi_i} \widetilde{\xi_j}\bigr] \scalar{ \phi_{i,h}, \phi_{j,h} }{H}
    \Bigr) \\
    &\qquad
    - 2 \sum_{i=1}^{N_h} \sum_{j=1}^{N_h} \bbE\bigl[\widetilde{\xi_i} \xi_j\bigr] \scalar{ \phi_{i,h}, e_{j,h} }{H}.
\end{align*}
By definition of the matrices $\mathbf{R}$ and $\mathbf{M}$
and due to the
distribution of the random vectors $\xivec$ and $\widetilde{\xivec}$
we have for $i,j\in\{1,\ldots,N_h\}$:
\begin{align*}
    \bbE[\xi_i \xi_j]                                   &= \delta_{ij}, &
    \scalar{ e_{i,h}, e_{j,h} }{H}                      &= \delta_{ij}, \\
    \bbE\bigl[\widetilde{\xi_i} \widetilde{\xi_j}\bigr] &= [\Mmat^{-1}]_{ij}, &
    \scalar{ \phi_{i,h}, \phi_{j,h} }{H}                &= M_{ji}, \\
    \bbE\bigl[\widetilde{\xi_i} \xi_j\bigr]             &= [\Rmat^{-1}]_{ij}, &
    \scalar{ \phi_{i,h}, e_{j,h} }{H}                   &= R_{ji},
\end{align*}
where $\delta_{ij}$ is the Kronecker delta.
Thus, in terms of the trace $\tr$ of matrices
we have
\begin{align*}
    \bbE\bigl[ \norm{\white_h^\cE - \white_h^\Phi}{H}^2 \bigr]
    &=
    \tr(\Imat)
    + \tr( \Mmat^{-1} \Mmat )
    - 2 \tr( \Rmat^{-1} \Rmat )
    = 0,
\end{align*}
which proves the equivalence of
$\white_h^\cE$ and $\white_h^\Phi$
in $L_2(\Omega;H)$.
\end{proof}

Our numerical approach to cope with the fractional
order equation~\eqref{e:Lbeta} will be based on the
following representation of the inverse $L^{-\beta}$
from the Dunford--Taylor calculus
due to~\cite{balakrishnan1960}, see also \citep[][\S IX.11, Eq.~(4)]{yosida1995}:
\begin{align*}
    L^{-\beta}
        &=
        \frac{\sin(\pi\beta)}{\pi}
        \int_0^{\infty} \lambda^{-\beta} (\lambda I + L)^{-1} \, \rd \lambda \\
%        &=
%        \frac{2\sin(\pi\beta)}{\pi}
%        \int_0^{\infty} t^{2\beta-1} (I + t^2 L)^{-1} \, \rd t \\
        &=
        \frac{2\sin(\pi\beta)}{\pi}
        \int_{-\infty}^{\infty} e^{2\beta y} (I + e^{2y} L)^{-1} \, \rd y.
\end{align*}

We choose an equidistant grid
$\{y_\ell = \ell k : \ell\in\bbZ, -K^{-} \leq \ell \leq K^+\}$
with step size $k>0$ for $y$
and replace the differential
operator $L$ with its discrete version $L_h$
to formulate the following
quadrature method
proposed by~\cite{bonito2015}:
\begin{align}\label{e:Qhbeta}
    Q^\beta_{h,k} := \frac{2 k \sin(\pi\beta)}{\pi} \sum_{\ell=-K^{-}}^{K^{+}} e^{2\beta y_\ell} (I + e^{2 y_\ell} L_h)^{-1}.
\end{align}
Exponential convergence
of order $\cO(e^{-\pi^2/(2k)})$
to $L_h^{-\beta}$
with respect to the norm
\begin{align}\label{e:norm-LVh}
    \norm{T}{\cL(V_h)} := \sup\limits_{\phi_h \in V_h\setminus\{0\}} \frac{\norm{T\phi_h}{H}}{\norm{\phi_h}{H}}
\end{align}
on the space
$\cL(V_h) := \{ T \from V_h \to V_h \text{ linear}\}$
has been proven~\citep[][Lem.~3.4, Rem.~3.1, Thm.~3.5]{bonito2015}
for the choice
\begin{align*}
    K^- := \left\lceil \frac{\pi^2}{4\beta k^2} \right\rceil,
    \qquad
    K^+ := \left\lceil \frac{\pi^2}{4(1-\beta)k^2} \right\rceil.
\end{align*}
Note that the quadrature~\eqref{e:Qhbeta}
involves only non-fractional resolvents of the discrete operator $L_h$.
Thus, the corresponding numerical method is readily implementable.

With the notions of the
$H$-orthogonal projection $\Pi_h$
on $V_h$,
the noise approximation
$\white_h^\Phi$ in~\eqref{e:kl-Vh}
and the quadrature $Q_{h,k}^\beta$
in~\eqref{e:Qhbeta}
at hand,
we can now
introduce the numerical approximation $u_{h,k}^{Q}$
of the solution $u$ to~\eqref{e:Lbeta} as
\begin{align}\label{e:uQh}
    u_{h,k}^{Q} := Q_{h,k}^\beta(\Pi_h g + \white_h^\Phi).
\end{align}

\begin{remark}\label{rem:rhs-easysample}
We emphasize the construction
of the noise term $\white_h^\Phi$
on the right-hand side of~\eqref{e:uQh}:
For discretizing the white noise $\white$,
it is common to project the truncated
Karhunen--Lo\`{e}ve expansion $\white_N$ in~\eqref{e:klN}
onto the finite element space $V_h$ and use
the noise approximation $\Pi_h \white_N \in L_2(\Omega; V_h)$
\citep[e.g.,][]{zhang2016}.
Instead, we define $\white_h^\Phi$ in~\eqref{e:kl-Vh}
in such a way that it is mean-square equivalent to
the noise approximation $\white_h^\cE$ in~\eqref{e:kl-Lh},
which can be interpreted as
$V_h$-valued white noise expanded
with respect to the
$H$-orthonormal eigenvectors %$\{e_{j,h}\}_{j=1}^{N_h}$
of $L_h$.
Note that the noise approximation $\white_h^\cE$
is needed only for the error analysis, but not
for the actual implementation of the numerical algorithm.

This approach has the following
two major advantages in practice:
\begin{enumerate}
\item Samples of the truncated white noise $\white_N$
    are not needed and, thus, neither are
    the exact eigenvectors of the operator $L$.
\item For the computation of the approximation
    $u_{h,k}^{Q}$ in~\eqref{e:uQh} in practice,
    one has to sample from the load vector $\bvec$
    with entries
    \begin{align*}
        b_i := \scalar{\Pi_h g + \white_h^\Phi, \phi_{i,h}}{H},
        \qquad
        1\leq i \leq N_h,
    \end{align*}
    where
    $\{\phi_{j,h}\}_{j=1}^{N_h}$
    is a basis
    of the finite element space $V_h$.
    We emphasize that this is computationally feasible
    if the basis $\Phi$ in the
    noise approximation $\white_h^\Phi$ is chosen
    as the same, since then
    $\bvec \sim \cN(\gvec, \Mmat)$,
    where $g_i = \scalar{g, \phi_{i,h}}{H}$ and
    $\Mmat$ is the mass matrix with respect
    to the basis $\Phi$,
    which is usually sparse.
    Hence, samples of $\bvec$
    can be generated from
    $\bvec = \gvec +  \mathbf{G}\zvec$,
    where $\zvec \sim \cN(\mathbf{0},\Imat)$ and $\mathbf{G}$
    is a Cholesky factor of $\Mmat = \mathbf{G}\mathbf{G}^T$.
\end{enumerate}
\end{remark}

The following estimate of the strong mean-square error
between the exact solution $u$
to the fractional order equation~\eqref{e:Lbeta}
and the numerical approximation $u^Q_{h,k}$ in~\eqref{e:uQh}
is our main result.

\begin{theorem}\label{thm:strong-conv}
Let the family $(V_h)_{h\in(0,1)}$
of finite element spaces
satisfy Assumption~\ref{ass:Vh}
and assume that
the growth of the eigenvalues
of the operator $L$ is given
by~\eqref{e:lambdaj}
for an exponent $\alpha$ with
\begin{align}\label{e:ass-alpha}
    \tfrac{1}{2\beta}
    <
    \alpha
    \leq
    \min\left\{ \tfrac{r}{(q-1)d}, \tfrac{2s}{qd} \right\},
\end{align}
where the values of
$d\in\bbN$, $r,s>0$, and $q>1$
are the same as in Assumption~\ref{ass:Vh}.
Then, for sufficiently small $h\in(0,h_0)$ and $k\in(0,k_0)$,
the strong $L_2(\Omega; H)$ error
between the solution $u$ of~\eqref{e:Lbeta}
and the approximation $u_{h,k}^{Q}$ in~\eqref{e:uQh}
is bounded by
\begin{align}\label{e:err-strong}
    \norm{u - u_{h,k}^{Q}}{L_2(\Omega; H)}
        \leq C
        \left(h^{\min\{d(\alpha\beta-1/2),r,s\}} + e^{-\pi^2/(2k)} h^{-d/2} \right)
        (1 + \norm{g}{H}),
\end{align}
where the constant $C>0$ is independent of $h$ and $k$.
\end{theorem}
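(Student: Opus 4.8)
The plan is to split the error $u - u_{h,k}^Q$ into a sum of terms that can each be controlled by results already at hand, and then to optimize the split so that the mean-square norms combine into the stated rate. Writing $u = L^{-\beta}(g+\white)$ and $u_{h,k}^Q = Q_{h,k}^\beta(\Pi_h g + \white_h^\Phi)$, and using Lemma~\ref{lem:white-equiv} to replace $\white_h^\Phi$ by $\white_h^\cE$ in $L_2(\Omega;H)$, I would insert the intermediate quantities $L_h^{-\beta}\Pi_h g$, $L_h^{-\beta}\white_h^\cE$ and $Q_{h,k}^\beta$ applied to the same data, so that
\begin{align*}
    \norm{u - u_{h,k}^Q}{L_2(\Omega;H)}
    &\leq
    \norm{L^{-\beta}g - L_h^{-\beta}\Pi_h g}{H}
    + \norm{(L_h^{-\beta} - Q_{h,k}^\beta)\Pi_h g}{H} \\
    &\quad
    + \norm{L^{-\beta}\white - L_h^{-\beta}\white_h^\cE}{L_2(\Omega;H)}
    + \norm{(L_h^{-\beta} - Q_{h,k}^\beta)\white_h^\cE}{L_2(\Omega;H)}.
\end{align*}
The second and fourth terms are the quadrature errors; by the cited bound of~\cite{bonito2015}, $\norm{L_h^{-\beta} - Q_{h,k}^\beta}{\cL(V_h)} \lesssim e^{-\pi^2/(2k)}$, so the second term is $\lesssim e^{-\pi^2/(2k)}\norm{g}{H}$ and the fourth is $\lesssim e^{-\pi^2/(2k)} \norm{\white_h^\cE}{L_2(\Omega;H)}$. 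Since $\norm{\white_h^\cE}{L_2(\Omega;H)}^2 = N_h$ and $N_h \propto h^{-d}$ by Assumption~\ref{ass:Vh}\ref{ass:Vh-1}, this contributes the $e^{-\pi^2/(2k)} h^{-d/2}$ factor.

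For the deterministic finite element error $\norm{L^{-\beta}g - L_h^{-\beta}\Pi_h g}{H}$ I would invoke the known convergence rate for the spectral fractional FEM — this is essentially Theorem~4.3 (or the analogous estimate) of~\cite{bonito2015}, giving a rate $h^{\min\{2\beta d, r, s\}}$ up to logarithmic factors, which is dominated by $h^{\min\{d(\alpha\beta - 1/2), r, s\}}$ under the standing hypotheses; alternatively, I would reprove it by the spectral decomposition, splitting the sum over eigenmodes at the index $\sim N_h$ and using~\eqref{e:ass-lambdajh}--\eqref{e:ass-ejh} together with~\eqref{e:lambdaj}. The crux of the argument — and what I expect to be the main obstacle — is the stochastic term $\norm{L^{-\beta}\white - L_h^{-\beta}\white_h^\cE}{L_2(\Omega;H)}$. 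Here I would expand both pieces in their respective orthonormal eigenbases, $L^{-\beta}\white = \sum_j \lambda_j^{-\beta}\xi_j e_j$ and $L_h^{-\beta}\white_h^\cE = \sum_{j=1}^{N_h}\lambda_{j,h}^{-\beta}\xi_j e_{j,h}$, and use independence and unit variance of the $\xi_j$ to write the squared $L_2(\Omega;H)$ norm as a deterministic sum
\begin{align*}
    \bbE\Bigl[\Bigl\|\textstyle\sum_{j=1}^{N_h}(\lambda_j^{-\beta}e_j - \lambda_{j,h}^{-\beta}e_{j,h})\xi_j + \sum_{j>N_h}\lambda_j^{-\beta}\xi_j e_j\Bigr\|_H^2\Bigr]
    = \sum_{j=1}^{N_h}\norm{\lambda_j^{-\beta}e_j - \lambda_{j,h}^{-\beta}e_{j,h}}{H}^2 + \sum_{j>N_h}\lambda_j^{-2\beta}.
\end{align*}
The tail $\sum_{j>N_h}\lambda_j^{-2\beta} \lesssim \sum_{j>N_h} j^{-2\alpha\beta} \lesssim N_h^{1-2\alpha\beta} \lesssim h^{d(2\alpha\beta - 1)}$, which is exactly $h^{2d(\alpha\beta - 1/2)}$, and this is where the lower bound $\alpha > 1/(2\beta)$ in~\eqref{e:ass-alpha} is used to make the exponent positive. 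For the finite part I would use the triangle-inequality split $\lambda_j^{-\beta}e_j - \lambda_{j,h}^{-\beta}e_{j,h} = \lambda_j^{-\beta}(e_j - e_{j,h}) + (\lambda_j^{-\beta} - \lambda_{j,h}^{-\beta})e_{j,h}$; the first piece is bounded using~\eqref{e:ass-ejh} by $C_2 h^{2s}\sum_{j\leq N_h}\lambda_j^{q-2\beta}$, and the second by a mean-value estimate $|\lambda_j^{-\beta} - \lambda_{j,h}^{-\beta}| \leq \beta\lambda_j^{-\beta-1}|\lambda_{j,h} - \lambda_j| \leq \beta C_1 h^r \lambda_j^{q-\beta-1}$ from~\eqref{e:ass-lambdajh}, giving $\beta^2 C_1^2 h^{2r}\sum_{j\leq N_h}\lambda_j^{2(q-\beta-1)}$. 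Both resulting sums, after inserting $\lambda_j \lesssim j^\alpha$ and $N_h \propto h^{-d}$, are dominated (possibly with a logarithmic factor when the exponent hits $-1$) by the respective rates $h^r$ and $h^s$ — and this is precisely where the upper bounds $\alpha \leq r/((q-1)d)$ and $\alpha \leq 2s/(qd)$ in~\eqref{e:ass-alpha} enter, ensuring the powers of $h^{-d}$ produced by the sums do not overwhelm $h^{2r}$ and $h^{2s}$.

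Collecting the four contributions gives $\norm{u - u_{h,k}^Q}{L_2(\Omega;H)} \lesssim (h^{\min\{d(\alpha\beta-1/2),r,s\}} + e^{-\pi^2/(2k)}h^{-d/2})(1+\norm{g}{H})$, which is~\eqref{e:err-strong}. The one technical subtlety I would be careful about is the sign convention in~\eqref{e:ass-ejh}: the eigenvectors $e_{j,h}$ are only defined up to sign (and up to rotation within eigenspaces in the degenerate case), so I would assume, as is standard, that the signs are chosen to make $\norm{e_j - e_{j,h}}{H}$ small, and note that this is exactly the content of Assumption~\ref{ass:Vh}\ref{ass:Vh-2}; the expansion $\white_h^\cE = \sum_j \xi_j e_{j,h}$ then uses the same $\xi_j$ paired with $e_{j,h}$ as $\white_N$ pairs with $e_j$, which is what makes the cross terms telescope cleanly. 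I would also make the "sufficiently small $h,k$" quantitative only to the extent of requiring $h < h_0$ and $k < k_0$ with $k_0$ chosen so the quadrature cutoffs $K^\pm$ are consistent with the~\cite{bonito2015} hypotheses.
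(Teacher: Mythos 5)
Your proposal is correct and is, at bottom, the paper's argument with the bookkeeping rearranged. The paper inserts the truncated spectral solution $u_{N_h}$ of~\eqref{e:uN} and proves three separate bounds --- truncation (Lemma~\ref{lem:err-trunc}), comparison of exact and discrete eigenpairs (Lemma~\ref{lem:err-fem}, with the $g$-part carried inside the same spectral sums), and quadrature (Lemma~\ref{lem:err-quad}) --- plus the noise-equivalence Lemma~\ref{lem:white-equiv}; these are exactly the ingredients you use. You instead split deterministic and stochastic contributions and compare $L^{-\beta}\white$ with $L_h^{-\beta}\white_h^\cE$ directly, which merges the paper's truncation and FEM steps into a single diagonal sum; the inner estimates (independence of the $\xi_j$, the tail bound requiring $2\alpha\beta>1$, the eigenvector bound~\eqref{e:ass-ejh}, the mean-value estimate from~\eqref{e:ass-lambdajh}, and the $e^{-\pi^2/(2k)}\sqrt{N_h}$ quadrature contribution) coincide with the paper's. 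Your option of simply citing the deterministic fractional FEM rate of \cite{bonito2015} for the $g$-part is a shortcut the paper does not take, and the rate you quote for it is not of the correct form in this abstract eigenvalue-growth setting; however, your stated fallback --- redoing the $g$-part by the same spectral split using~\eqref{e:ass-lambdajh}--\eqref{e:ass-ejh} --- is precisely what the paper's Lemma~\ref{lem:err-fem} does, so nothing essential is lost.

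One correction to how you describe the role of~\eqref{e:ass-alpha}: its upper bounds do \emph{not} make the finite sums $h^{2s}\sum_{j\le N_h}\lambda_j^{q-2\beta}$ and $h^{2r}\sum_{j\le N_h}\lambda_j^{2(q-\beta-1)}$ be $\lesssim h^{2s}$ and $\lesssim h^{2r}$. For instance, with $d=2$, $\alpha=1$, $q=r=s=2$, $\beta=3/4$, the first quantity is of order $h^{4}N_h^{3/2}\propto h$, far larger than $h^{4}$. What $d\alpha q\le 2s$ and $d\alpha(q-1)\le r$, combined with $N_h\propto h^{-d}$, actually give is $h^{2s}N_h^{1+\alpha(q-2\beta)}\lesssim h^{2d(\alpha\beta-1/2)}$ and $h^{2r}N_h^{1+2\alpha(q-\beta-1)}\lesssim h^{2d(\alpha\beta-1/2)}$, so each term is bounded by $h^{2\min\{s,\,d(\alpha\beta-1/2)\}}$, respectively $h^{2\min\{r,\,d(\alpha\beta-1/2)\}}$. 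Since your final collected rate is the minimum over $\{d(\alpha\beta-1/2),r,s\}$ anyway, the conclusion~\eqref{e:err-strong} is unaffected; only the stated mechanism needs this adjustment.
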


%=======================================================================================
\section{Partition of the error and error estimates}\label{section:error}
%=======================================================================================

In order to prove Theorem~\ref{thm:strong-conv}
we express the difference between the exact solution $u$
and the approximation $u_{h,k}^{Q}$ as follows:
\begin{align*}
    u - u_{h,k}^{Q}
        =
        (u - u_{N_h})
        +
        (u_{N_h} - u_h^\cE)
        +
        (u_h^\cE - u_h^\Phi)
        +
        (u_h^\Phi - u_{h,k}^{Q}),
\end{align*}
and partition the strong error in~\eqref{e:err-strong}
accordingly.
Here, $u_{N_h}$, $u_h^\cE$, and
$u_h^\Phi$ are defined
in terms of the truncated white noise
in~\eqref{e:klN}
and the white noise approximations
in~\eqref{e:kl-Lh}--\eqref{e:kl-Vh}
as the $\bbP$-almost sure solutions
to the following equations:
\begin{align}
        L^{\beta} u_{N}      &= g_{N} + \white_{N}, \label{e:uN}
\intertext{for $N\in\bbN$, where $g_{N}  := \sum_{j=1}^{N} \scalar{g, e_j}{H} \, e_j$,}
        L_h^{\beta} u_h^\cE  &= \Pi_h g + \white_h^\cE, \label{e:uhcE} \\
        L_h^{\beta} u_h^\Phi &= \Pi_h g + \white_h^\Phi, \label{e:uhPhi}
\end{align}
and $u_{N_h}$ refers to the truncation with $N=N_h=\dim(V_h)$ terms
in~\eqref{e:uN}.
Note that by Lemma~\ref{lem:white-equiv}
the difference between $u_h^\cE$ and $u_h^\Phi$ vanishes identically
in $L_2(\Omega;H)$:
\begin{align}\label{e:uh-equiv}
    \norm{u_h^\cE - u_h^\Phi}{L_2(\Omega;H)}
        \leq \norm{L_h^{-\beta}}{\cL(V_h)} \norm{\white_h^\cE - \white_h^\Phi}{L_2(\Omega;H)} = 0.
\end{align}

In the following, we address
the three remaining terms separately:
the truncation error,
the error of the finite element discretization,
and the error caused by the quadrature approximation $Q^\beta_{h,k}$ in~\eqref{e:Qhbeta}
of the discrete fractional inverse $L_h^{-\beta}$.

%=======================================================================================
\subsection{Truncation error}\label{subsec:err-trunc}
%=======================================================================================

In the lemma below, we bound the strong mean-square error
between the exact solution $u$ to the fractional
order equation~\eqref{e:Lbeta}
and the approximation $u_N$ in~\eqref{e:uN}
based on the truncated right-hand side $g_N + \white_N$.

\begin{lemma}\label{lem:err-trunc}
Assume that the eigenvalues of
the operator $L$ satisfy~\eqref{e:lambdaj}
and that $2\alpha\beta > 1$.
Let $u \in L_2(\Omega; H)$
be the solution to~\eqref{e:Lbeta}.
Then, for any $g\in H$, $N\in\bbN$, there exists a
unique solution $u_N\in L_2(\Omega; \Hdot{2\beta})$ to
the truncated equation~\eqref{e:uN} and it
satisfies
\begin{align*}
    \bbE\left[ \norm{u-u_N}{H}^2 \right]
        \leq C N^{-(2\alpha\beta-1)} \left(1 + \norm{g}{H}^2\right),
\end{align*}
where the constant $C>0$ depends only on
$\alpha, \beta$, and the constants in~\eqref{e:lambdaj}.
In particular,
under Assumption~\ref{ass:Vh}
it holds that %for $N=N_h$
\begin{align}\label{e:err-trunc}
    \norm{u-u_{N_h}}{L_2(\Omega;H)}
        \lesssim h^{d(\alpha\beta-1/2)} (1 + \norm{g}{H}).
\end{align}
\end{lemma}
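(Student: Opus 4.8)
The plan is to expand both $u$ and $u_N$ in the eigenbasis $\{e_j\}_{j\in\bbN}$ of $L$ and compute the mean-square error termwise. By Lemma~\ref{lem:isometry} and the Karhunen--Lo\`{e}ve expansion~\eqref{e:kl}, the solution to~\eqref{e:Lbeta} is $u = L^{-\beta}(g + \white) = \sum_{j\in\bbN} \lambda_j^{-\beta}\bigl(\scalar{g,e_j}{H} + \xi_j\bigr)\,e_j$, and since $g_N + \white_N$ retains only the first $N$ coefficients, the solution to~\eqref{e:uN} is $u_N = \sum_{j=1}^N \lambda_j^{-\beta}\bigl(\scalar{g,e_j}{H} + \xi_j\bigr)\,e_j$; its membership in $L_2(\Omega;\Hdot{2\beta})$ and uniqueness are immediate from Lemma~\ref{lem:isometry} applied to the finite-rank right-hand side, which lies in every $\Hdot{s}$. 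Hence $u - u_N = \sum_{j>N}\lambda_j^{-\beta}\bigl(\scalar{g,e_j}{H}+\xi_j\bigr)e_j$, and by orthonormality of $\{e_j\}$, independence and unit variance of the $\xi_j$, and $\bbE[\xi_j]=0$,
\begin{align*}
    \bbE\bigl[\norm{u-u_N}{H}^2\bigr]
    = \sum_{j>N} \lambda_j^{-2\beta}\bigl(\scalar{g,e_j}{H}^2 + 1\bigr)
    \leq \lambda_{N+1}^{-2\beta}\norm{g}{H}^2 + \sum_{j>N}\lambda_j^{-2\beta}.
\end{align*}

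Next I would estimate the two pieces using the eigenvalue growth~\eqref{e:lambdaj}. For the first piece, $\lambda_{N+1}^{-2\beta} \leq c_\lambda^{-2\beta}(N+1)^{-2\alpha\beta} \leq c_\lambda^{-2\beta} N^{-2\alpha\beta} \leq c_\lambda^{-2\beta} N^{-(2\alpha\beta-1)}$ since $N\geq 1$ and $2\alpha\beta-1 < 2\alpha\beta$. For the tail sum, $\sum_{j>N}\lambda_j^{-2\beta} \leq c_\lambda^{-2\beta}\sum_{j>N} j^{-2\alpha\beta}$, and because the assumption $2\alpha\beta > 1$ guarantees convergence, an integral comparison gives $\sum_{j>N} j^{-2\alpha\beta} \leq \int_N^\infty x^{-2\alpha\beta}\,\rd x = \frac{N^{1-2\alpha\beta}}{2\alpha\beta-1}$. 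Combining the two bounds yields $\bbE[\norm{u-u_N}{H}^2] \leq C\,N^{-(2\alpha\beta-1)}(1+\norm{g}{H}^2)$ with $C = c_\lambda^{-2\beta}\bigl(1 + \tfrac{1}{2\alpha\beta-1}\bigr)$, which depends only on $\alpha$, $\beta$, and $c_\lambda$, as claimed.

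Finally, to obtain~\eqref{e:err-trunc} I would specialize to $N = N_h = \dim(V_h)$. Under Assumption~\ref{ass:Vh}\ref{ass:Vh-1} we have $N_h \propto h^{-d}$, so $N_h^{-(2\alpha\beta-1)} \eqsim h^{d(2\alpha\beta-1)}$ and therefore $\norm{u-u_{N_h}}{L_2(\Omega;H)} = \bigl(\bbE[\norm{u-u_{N_h}}{H}^2]\bigr)^{1/2} \lesssim h^{d(\alpha\beta-1/2)}(1+\norm{g}{H})$, using $(1+\norm{g}{H}^2)^{1/2} \leq 1 + \norm{g}{H}$. There is no genuine obstacle here: the argument is a direct termwise computation, and the only points requiring care are checking that the hypothesis $2\alpha\beta>1$ is exactly what makes the tail series summable (so that the integral comparison is legitimate and the exponent $2\alpha\beta-1$ is positive), and bookkeeping the absorption of lower-order powers of $N$ so that both contributions carry the single clean rate $N^{-(2\alpha\beta-1)}$.
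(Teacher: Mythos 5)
Your proposal is correct and follows essentially the same route as the paper: eigenbasis expansion of $u$ and $u_N$, termwise computation of the mean-square error, and an integral comparison of the tail sum using $2\alpha\beta>1$ together with $N_h\propto h^{-d}$. The only cosmetic difference is that you bound the $g$-dependent tail by $\lambda_{N+1}^{-2\beta}\norm{g}{H}^2$ separately, whereas the paper folds it into the single factor $(1+\norm{g}{H}^2)$ times the tail sum; both give the same rate and constant dependence.
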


\begin{proof}
Existence and uniqueness of a solution $u_N$
in $L_2(\Omega; \Hdot{2\beta})$ follows from
the fact that for any $g\in H$ the truncated
right-hand side $g_N + \white_N$ is an
element of $L_2(\Omega; H)$
as well as
the isomorphism property of
$L^\beta\from\Hdot{2\beta}\to H$
in Lemma~\ref{lem:isometry}.
If $2\alpha\beta > 1$, we obtain for any $N\in\bbN$:
\begin{align*}
    \bbE\left[ \norm{u-u_N}{H}^2 \right]
        &= \bbE\bigl[ \norm{L^{-\beta} ( g - g_N + \white - \white_N )}{H}^2 \bigr]
         = \sum_{j>N} \lambda_j^{-2\beta} \left( \scalar{g,e_j}{H}^2 + 1 \right) \\
        &\leq c_\lambda^{-2\beta} \left(1 + \norm{g}{H}^2 \right) \sum_{j>N} j^{-2\alpha\beta}
         \leq \frac{N^{1-2\alpha\beta}}{c_\lambda^{2\beta} (2\alpha\beta - 1)} \left(1 + \norm{g}{H}^2\right).
\end{align*}
Under Assumption~\ref{ass:Vh}\ref{ass:Vh-1} we have $N_h \propto h^{-d}$ and \eqref{e:err-trunc} readily follows.
\end{proof}

%=======================================================================================
\subsection{Finite element discretization error}\label{subsec:err-fem}
%=======================================================================================

The next ingredient in the derivation of~\eqref{e:err-strong}
in Theorem~\ref{thm:strong-conv}
is an upper bound for the error
caused by introducing the
finite element element discretization.

More precisely,
the solution $u_N$ of the truncated problem~\eqref{e:uN}
corresponds to
the best $\widetilde{V}_N$-valued approximation
of $u\in L_2(\Omega; H)$.
Here, the finite-dimensional
subspace $\widetilde{V}_N \subset \Hdot{1}$
is the linear span
of the first $N$ eigenvectors
of the operator $L$.
Subsequently,
the approximation $u_h^\cE$
has been defined
in~\eqref{e:uhcE},
which takes values in
another finite-dimensional subspace,
namely in the
finite element space $V_h \subset \Hdot{1}$.

The purpose of the following lemma
is to bound the error
between these two approximations
when $N=\dim(V_h)$.

\begin{lemma}\label{lem:err-fem}
Let Assumption~\ref{ass:Vh} be satisfied. Assume
that the eigenvalue growth of the operator $L$
is given by~\eqref{e:lambdaj}
for an exponent $\alpha$
with~\eqref{e:ass-alpha}.
Let $u_h^\cE$
be the unique element in $L_2(\Omega; V_h)$
satisfying~\eqref{e:uhcE}, i.e.,
\begin{align*}
    \scalar{L_h u_h^\cE, \phi_h}{H} = \scalar{\Pi_h g + \white_h^\cE, \phi_h}{H}
    \qquad
    \forall \phi_h \in V_h,
    \
    \bbP\text{\normalfont{-a.s.},}
\end{align*}
and let $u_{N_h} \in L_2(\Omega;H)$ denote the solution
to the truncated equation~\eqref{e:uN}
with $N=N_h=\dim(V_h)$ terms.
Then their difference
can be bounded by
\begin{align}\label{e:err-fem}
    \norm{u_{N_h} - u_h^\cE }{L_2(\Omega;H)}
        \leq C h^{\min\{ d(\alpha\beta-1/2), r, s\}} (1 + \norm{g}{H})
\end{align}
for sufficiently small $h\in(0,h_0)$,
where the constant $C>0$ depends only on
$\alpha, \beta$, and the constants
in~\eqref{e:lambdaj}, \eqref{e:ass-lambdajh}, and~\eqref{e:ass-ejh}.
\end{lemma}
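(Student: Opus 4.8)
The plan is to compare $u_{N_h}$ and $u_h^\cE$ by expanding both in terms of the respective eigenbases, so that the difference decomposes into contributions that can be matched term by term. Recall that $u_{N_h} = \sum_{j=1}^{N_h} \lambda_j^{-\beta}(\scalar{g,e_j}{H} + \xi_j)\,e_j$ while $u_h^\cE = \sum_{j=1}^{N_h} \lambda_{j,h}^{-\beta}(\scalar{\Pi_h g, e_{j,h}}{H} + \xi_j)\,e_{j,h}$, using the same standard normal variables $\xi_j$ in both. First I would insert and subtract the hybrid term $\sum_{j=1}^{N_h}\lambda_{j,h}^{-\beta}(\scalar{g,e_j}{H}+\xi_j)e_j$, splitting the error into a piece that keeps the eigenvectors $e_j$ but changes the eigenvalues from $\lambda_j$ to $\lambda_{j,h}$, and a piece that keeps the coefficients but changes $e_j$ to $e_{j,h}$ (and $\scalar{g,e_j}{H}$ to $\scalar{\Pi_h g,e_{j,h}}{H}$). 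The first piece, by orthonormality of $\{e_j\}$ and independence/unit variance of $\{\xi_j\}$, has mean-square norm $\sum_{j=1}^{N_h}(\lambda_j^{-\beta}-\lambda_{j,h}^{-\beta})^2(\scalar{g,e_j}{H}^2+1)$.

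**Estimating the eigenvalue-perturbation term.** Using the elementary bound $|\lambda_j^{-\beta}-\lambda_{j,h}^{-\beta}| \le \beta\,\lambda_j^{-\beta-1}(\lambda_{j,h}-\lambda_j)$ (valid since $\lambda_{j,h}\ge\lambda_j$ by~\eqref{e:ass-lambdajh} and $x\mapsto x^{-\beta}$ is convex decreasing), combined with $\lambda_{j,h}-\lambda_j \le C_1 h^r \lambda_j^q$, this term is controlled by $h^{2r}\sum_{j=1}^{N_h}\lambda_j^{2(q-1-\beta)}(\scalar{g,e_j}{H}^2+1)$. The series $\sum_j \lambda_j^{2(q-1-\beta)}$ converges or grows at a controlled rate depending on the sign of the exponent; by~\eqref{e:lambdaj} it behaves like $\sum_j j^{2\alpha(q-1-\beta)}$, and the assumption $\alpha \le r/((q-1)d)$ together with $N_h \propto h^{-d}$ is precisely what is needed to absorb the possible growth of the partial sum against the prefactor $h^{2r}$, yielding a bound of order $h^{2\min\{r,\dots\}}$. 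The $g$-dependent part uses $\scalar{g,e_j}{H}^2 \le$ a constant times $\norm{g}{H}^2$ pointwise combined with the same summation, giving the $(1+\norm{g}{H})$ factor.

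**Estimating the eigenvector-perturbation term.** For the second piece I would use~\eqref{e:ass-ejh}, $\norm{e_j - e_{j,h}}{H}^2 \le C_2 h^{2s}\lambda_j^q$, to bound $\norm{u - \text{(projection terms)}}{}$; since the $e_{j,h}$ are orthonormal but the $e_j$ are not orthonormal to the $e_{j,h}$, I would estimate crudely via the triangle inequality, $\Norm{\sum_j c_j(e_j - e_{j,h})}{H} \le \sum_j |c_j|\,\norm{e_j - e_{j,h}}{H}$, or — to avoid losing too much — pair the telescoped differences against a test function and use Cauchy–Schwarz in a weighted $\ell^2$ sense. The coefficients here are $c_j = \lambda_{j,h}^{-\beta}(\scalar{g,e_j}{H}+\xi_j)$, so after taking expectations the relevant sum is again of the form $h^{2s}\sum_{j=1}^{N_h}\lambda_{j,h}^{-2\beta}\lambda_j^q(\scalar{g,e_j}{H}^2+1)$, and the constraint $\alpha \le 2s/(qd)$ is what makes this of order $h^{2\min\{s,\dots\}}$ after accounting for $N_h\propto h^{-d}$. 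One also needs to handle the term $\scalar{(I-\Pi_h)g, e_{j,h}}{H}=0$ (which vanishes since $e_{j,h}\in V_h$), so only $\scalar{g,e_j}{H}$ versus $\scalar{g,e_{j,h}}{H}$ enters, and their difference is again governed by $\norm{e_j-e_{j,h}}{H}$.

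**Main obstacle.** The delicate point is the eigenvector term: because $\{e_j\}$ and $\{e_{j,h}\}$ are two different orthonormal-ish systems, one cannot simply read off the mean-square norm as a single sum of squares, and a naive triangle inequality over $j$ would produce $\bigl(\sum_j |c_j|\,\|e_j-e_{j,h}\|\bigr)^2$ rather than $\sum_j |c_j|^2\|e_j-e_{j,h}\|^2$, which is too lossy given only polynomial decay of $\lambda_{j,h}^{-2\beta}$. I expect the fix is to split the sum at a cutoff index (or to use that $\Pi_h e_j$ is close to $e_{j,h}$ and that $\{e_{j,h}\}$ is a genuine orthonormal basis of $V_h$, so the $V_h$-components can be summed orthogonally while the $H\ominus V_h$-component is handled separately), and then to balance the two resulting contributions by choosing the cutoff as a power of $h$; verifying that the exponent obtained this way matches $\min\{d(\alpha\beta-1/2),r,s\}$ and does not degrade is the technical heart of the lemma. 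Everything else is routine summation of the geometric-type series $\sum_{j\le N_h} j^{a}$ against powers of $h$, using $N_h \asymp h^{-d}$ and the two inequalities in~\eqref{e:ass-alpha}.
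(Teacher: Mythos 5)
Your decomposition (first swap the eigenvalues $\lambda_j \to \lambda_{j,h}$ keeping the exact eigenvectors, then swap $e_j \to e_{j,h}$ and the $g$-coefficients) is a minor rearrangement of the paper's three-term split, and your treatment of the eigenvalue-perturbation piece — mean value theorem, \eqref{e:ass-lambdajh}, \eqref{e:lambdaj}, $N_h \propto h^{-d}$ and $\alpha d(q-1)\le r$ — matches the paper's corresponding term exactly. The problem is the eigenvector-perturbation piece, which you explicitly do not carry out: you call it the ``technical heart,'' conjecture a cutoff-index splitting, and leave the verification that the exponent comes out to $\min\{d(\alpha\beta-1/2),r,s\}$ open. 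That is a genuine gap, and moreover the premise behind it is mistaken: it is not true that one ``cannot read off the mean-square norm as a single sum of squares.'' For the stochastic part no orthogonality of the differences $e_j-e_{j,h}$ is needed, because the expectation kills the cross terms: by independence and unit variance of the $\xi_j$,
\begin{align*}
\bbE\Bigl[\Bigl\|\sum_{j=1}^{N_h}\lambda_j^{-\beta}\,\xi_j\,(e_j-e_{j,h})\Bigr\|_H^2\Bigr]
= \sum_{j=1}^{N_h}\lambda_j^{-2\beta}\,\norm{e_j-e_{j,h}}{H}^2 .
\end{align*}
For the deterministic $g$-part, the triangle inequality followed by Cauchy--Schwarz with the pairing $|\scalar{g,e_j}{H}|\cdot\bigl(\lambda_j^{-\beta}\norm{e_j-e_{j,h}}{H}\bigr)$ gives the bound $\norm{g}{H}^2\sum_{j=1}^{N_h}\lambda_j^{-2\beta}\norm{e_j-e_{j,h}}{H}^2$; this is exactly the ``weighted $\ell^2$ Cauchy--Schwarz'' you mention only as an aside, and it suffices — no cutoff and no balancing argument is needed. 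The same quantity also controls the coefficient mismatch $\scalar{g,e_j}{H}$ versus $\scalar{g,e_{j,h}}{H}$ (your observation that $\scalar{(I-\Pi_h)g,e_{j,h}}{H}=0$ is correct and is how the paper handles it).

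Once this is in place, \eqref{e:ass-ejh}, \eqref{e:lambdaj}, $N_h\propto h^{-d}$ and the assumption $d\alpha q\le 2s$ from \eqref{e:ass-alpha} turn $\sum_{j=1}^{N_h}\lambda_j^{-2\beta}\norm{e_j-e_{j,h}}{H}^2 \lesssim h^{2s}\sum_{j=1}^{N_h}j^{\alpha(q-2\beta)}$ into $h^{2s}+h^{2d(\alpha\beta-1/2)}$, which combined with your (correct) eigenvalue term yields \eqref{e:err-fem}. So your outline can be completed, but as written it is incomplete at precisely the step where you were unsure, and the repair is the probabilistic/Cauchy--Schwarz computation above rather than the cutoff construction you proposed.
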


\begin{proof}%[Proof of Lemma~\ref{lem:err-fem}]
In order to show the estimate in~\eqref{e:err-fem},
we first note that $\Pi_h g$ can be expanded in terms of the
orthonormal eigenvectors $\{e_{j,h}\}$ of $L_h$ by
\begin{align*}
    \Pi_h g = \sum_{j=1}^{N_h} \scalar{\Pi_h g, e_{j,h}}{H} \, e_{j,h} = \sum_{j=1}^{N_h} \scalar{g, e_{j,h}}{H} \, e_{j,h}.
\end{align*}
Thus, we can partition the difference in~\eqref{e:err-fem}
as follows: {\allowdisplaybreaks
\begin{align*}
    \norm{u_{N_h} &- u_h^\cE }{L_2(\Omega;H)}
         = \bigl\| L^{-\beta}(g_{N_h} + \white_{N_h}) - L_h^{-\beta}(\Pi_h g + \white_h^\cE) \bigr\|_{L_2(\Omega;H)} \\
        &\hspace*{-0.0133cm}= \Bigl\| \sum_{j=1}^{N_h} \lambda_j^{-\beta} (\scalar{g, e_j}{H} + \xi_j) e_j
            - \sum_{j=1}^{N_h} \lambda_{j,h}^{-\beta} (\scalar{g, e_{j,h}}{H} + \xi_j) e_{j,h} \Bigr\|_{L_2(\Omega;H)}
            \displaybreak \\
        &\hspace*{-0.0133cm}\leq \Bigl\| \sum_{j=1}^{N_h} \lambda_j^{-\beta} (\scalar{g, e_j}{H} + \xi_j) (e_j - e_{j,h}) \Bigr\|_{L_2(\Omega;H)}
            + \Bigl\| \sum_{j=1}^{N_h} \lambda_j^{-\beta} \scalar{g, e_j - e_{j,h}}{H} \, e_{j,h} \Bigr\|_{H} \\
        &\hspace*{-0.0133cm}\quad + \Bigl\| \sum_{j=1}^{N_h} (\lambda_j^{-\beta} - \lambda_{j,h}^{-\beta} ) ( \scalar{g, e_{j,h}}{H} + \xi_j) e_{j,h} \Bigr\|_{L_2(\Omega;H)}
        =: \text{\normalfont{(I)}} + \text{\normalfont{(II)}} + \text{\normalfont{(III)}}.
\end{align*}}
Since the random variables $\{\xi_j\}$ are
independent and standard normally distributed,
we obtain for the first term by the
Cauchy--Schwarz inequality for sums
\begin{align*}
    \text{\normalfont{(I)}}^2 &= \Bigl\| \sum_{j=1}^{N_h} \lambda_j^{-\beta} \scalar{g, e_j}{H} (e_j - e_{j,h}) \Bigr\|_H^2
            + \sum_{j=1}^{N_h} \lambda_j^{-2\beta} \norm{e_j-e_{j,h}}{H}^2 \\
        &\leq \Bigl( 1 + \sum_{j=1}^{N_h} \scalar{g, e_j}{H}^2 \Bigr) \sum_{j=1}^{N_h} \lambda_j^{-2\beta} \norm{e_j-e_{j,h}}{H}^2
         \leq ( 1 + \norm{g}{H}^2 ) \sum_{j=1}^{N_h} \lambda_j^{-2\beta} \norm{e_j-e_{j,h}}{H}^2 .
\end{align*}
Assumption~\ref{ass:Vh}\ref{ass:Vh-1}, \eqref{e:lambdaj}, and~\eqref{e:ass-ejh}
complete the estimation of the first term
\begin{align*}
\text{\normalfont{(I)}}^2
        &\lesssim h^{2s} (1 + \norm{g}{H}^2) \sum_{j=1}^{N_h} \lambda_j^{q-2\beta}
         \lesssim h^{2s} (1 + \norm{g}{H}^2) \sum_{j=1}^{N_h} j^{\alpha(q-2\beta)} \\
        &\lesssim h^{2s} \bigl(1 + N_h^{1 + \alpha(q-2\beta)} \bigr) (1 + \norm{g}{H}^2)
         \lesssim \bigl( h^{2s} + h^{2d(\alpha\beta-1/2)} \bigr) (1 + \norm{g}{H}^2)
\end{align*}
for $h\in(0,h_0)$, since
since $d \alpha q \leq 2s$ by~\eqref{e:ass-alpha}.
For the second term we find
\begin{align*}
    \text{\normalfont{(II)}}^2
        &= \sum_{j=1}^{N_h} \lambda_j^{-2\beta} \scalar{g, e_j - e_{j,h}}{H}^2
         \leq \norm{g}{H}^2 \sum_{j=1}^{N_h} \lambda_j^{-2\beta} \norm{e_j-e_{j,h}}{H}^2
\end{align*}
and conclude as for (I) that
$\text{\normalfont{(II)}}^2 \lesssim \bigl( h^{2s} + h^{2d(\alpha\beta-1/2)} \bigr) \norm{g}{H}^2$.
For (III) we use again the independence and distribution
of the random variables $\{\xi_j\}$ and obtain
\begin{align*}
    \text{\normalfont{(III)}}^2
        &= \sum_{j=1}^{N_h} \left( \lambda_j^{-\beta} - \lambda_{j,h}^{-\beta} \right)^2 \left( \scalar{g, e_{j,h}}{H}^2 + 1 \right)
        \leq \left( 1 + \norm{g}{H}^2 \right) \sum_{j=1}^{N_h} \left(\lambda_j^{-\beta} - \lambda_{j,h}^{-\beta} \right)^2.
\end{align*}
By the mean value theorem,
there exists $\widetilde{\lambda}_j \in (\lambda_j, \lambda_{j,h})$ such that
\begin{align*}
    \lambda_j^{-\beta} - \lambda_{j,h}^{-\beta}
        = \beta \widetilde{\lambda}_j^{-\beta-1} (\lambda_{j,h} - \lambda_j )
        \leq \beta \lambda_j^{-\beta-1} (\lambda_{j,h} - \lambda_j )
        \quad
        \forall j \in \{1,\ldots,N_h\}.
\end{align*}
Therefore, we can bound the third term
by~\eqref{e:lambdaj} and~\eqref{e:ass-lambdajh} as follows:
\begin{align*}
    \text{\normalfont{(III)}}^2
        &\lesssim ( 1 + \norm{g}{H}^2 ) \sum_{j=1}^{N_h} \lambda_j^{-2\beta-2} (\lambda_{j,h} - \lambda_j)^2
         \lesssim h^{2r} ( 1 + \norm{g}{H}^2 ) \sum_{j=1}^{N_h} \lambda_j^{2(q-\beta-1)} \\
        &\lesssim h^{2r} ( 1 + \norm{g}{H}^2 ) \sum_{j=1}^{N_h} j^{2\alpha(q-\beta-1)}
        \lesssim \bigl(h^{2r}  + h^{2d(\alpha\beta - 1/2)} \bigr) ( 1 + \norm{g}{H}^2 ),
\end{align*}
where we used the relations $N_h \propto h^{-d}$
from Assumption~\ref{ass:Vh}\ref{ass:Vh-1}
and $\alpha d (q-1) \leq r$
from~\eqref{e:ass-alpha} in the last estimate.
\end{proof}

%=======================================================================================
\subsection{Quadrature approximation error}\label{subsec:err-quad}
%=======================================================================================

As the final step for proving
the strong convergence result
of Theorem~\ref{thm:strong-conv},
we investigate the error
caused by applying the
quadrature $Q^\beta_{h,k}$
in~\eqref{e:Qhbeta}
instead of the discrete
fractional inverse $L_h^{-\beta}$
to the right-hand side
$\Pi_h g + \white_h^\Phi$.
The difference between
these two operators in
the norm~\eqref{e:norm-LVh}
on the space $\cL(V_h)$
has been bounded
by~\cite{bonito2015}.
The following lemma is a consequence
of that result and the distribution
of the noise approximation $\white_h^\Phi$.

\begin{lemma}\label{lem:err-quad}
Let $Q^\beta_{h,k} \from V_h \to V_h$ be the operator
in~\eqref{e:Qhbeta} approximating $L_h^{-\beta}$.
Then it holds for sufficiently small $k\in(0,k_0)$ and any
$\phi_h \in V_h$ that
\begin{align}\label{e:err-quad-1}
    \norm{(Q^\beta_{h,k} - L_h^{-\beta})\phi_h}{H}
        \leq C
                e^{-\pi^2/(2k)} \norm{\phi_h}{H},
\end{align}
where the constant $C>0$ depends on $\beta$ and grows linearly in
the largest eigenvalue $\lambda_{1,h}^{-1}$ of $L_h^{-1}$.
%
%In particular, the difference between
%$u_h^\Phi$ in~\eqref{e:uhPhi} and
%$u_{h,k}^Q$ in~\eqref{e:uQh}
%can be bounded as follows:
In particular, for
$u_h^\Phi$ in~\eqref{e:uhPhi} and
$u_{h,k}^Q$ in~\eqref{e:uQh},
it holds
\begin{align}\label{e:err-quad}
    \norm{u_h^\Phi - u_{h,k}^Q}{L_2(\Omega; H)}
        \lesssim
            e^{-\pi^2/(2k)} ( h^{-d/2} + \norm{g}{H} ).
\end{align}
\end{lemma}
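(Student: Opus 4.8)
The plan is to first establish the deterministic operator-norm bound~\eqref{e:err-quad-1} and then deduce the stochastic estimate~\eqref{e:err-quad} by applying it $\bbP$-a.s.\ to the right-hand side $\Pi_h g + \white_h^\Phi$.

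For~\eqref{e:err-quad-1} I would start from the Balakrishnan representation of $L_h^{-\beta}$ restricted to $V_h$ and split the error $Q_{h,k}^\beta - L_h^{-\beta}$ into a \emph{truncation error}, i.e.\ the two tails $\int_{-\infty}^{-K^-k}$ and $\int_{K^+k}^{\infty}$ of the integral, and the \emph{sinc quadrature error} on the interval $[-K^-k,K^+k]$. On $V_h$ the resolvents are symmetric positive definite with $\norm{(I+e^{2y}L_h)^{-1}}{\cL(V_h)} \le \min\{1, e^{-2y}\lambda_{1,h}^{-1}\}$ because $\lambda_{1,h}\le\lambda_{j,h}$, so the left tail is bounded by $\tfrac{1}{2\beta}e^{-2\beta K^-k}$ and the right tail by $\tfrac{\lambda_{1,h}^{-1}}{2(1-\beta)}e^{-2(1-\beta)K^+k}$; substituting the prescribed $K^{\mp}$ turns both exponents into $e^{-\pi^2/(2k)}$, and the right tail is exactly what produces the linear growth of the constant in $\lambda_{1,h}^{-1}$. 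For the quadrature error on the truncated interval I would invoke analyticity of $y\mapsto e^{2\beta y}(I+e^{2y}L_h)^{-1}$ in the strip $\{|\Im y|<\pi/4\}$, with operator-norm bounds uniform over the spectrum of $L_h$ up to a factor $\lambda_{1,h}^{-1}$, together with the classical exponential error bound for the trapezoidal rule — precisely the argument of~\citep[][Lem.~3.4, Rem.~3.1, Thm.~3.5]{bonito2015} — which again contributes at most a linear factor in $\lambda_{1,h}^{-1}$. Collecting the three contributions yields~\eqref{e:err-quad-1}.

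To pass to~\eqref{e:err-quad}, observe that $u_h^\Phi - u_{h,k}^Q = (L_h^{-\beta} - Q_{h,k}^\beta)(\Pi_h g + \white_h^\Phi)$ holds $\bbP$-a.s., so~\eqref{e:err-quad-1} gives pathwise
\begin{align*}
    \norm{u_h^\Phi - u_{h,k}^Q}{H}
        &\le C\,e^{-\pi^2/(2k)}\,\norm{\Pi_h g + \white_h^\Phi}{H} \\
        &\le C\,e^{-\pi^2/(2k)}\bigl( \norm{g}{H} + \norm{\white_h^\Phi}{H} \bigr),
\end{align*}
using $\norm{\Pi_h g}{H}\le\norm{g}{H}$; here the constant may be taken independent of $h$, since Assumption~\ref{ass:Vh}\ref{ass:Vh-2} forces $\lambda_{1,h}\ge\lambda_1>0$, whence $\lambda_{1,h}^{-1}\le\lambda_1^{-1}$. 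Taking the $L_2(\Omega)$-norm and using the mean-square identity $\norm{\white_h^\Phi}{L_2(\Omega;H)} = \norm{\white_h^\cE}{L_2(\Omega;H)}$ from Lemma~\ref{lem:white-equiv} together with $\bbE[\norm{\white_h^\cE}{H}^2] = \sum_{j=1}^{N_h}\bbE[\xi_j^2] = N_h$ (by $H$-orthonormality of $\{e_{j,h}\}$) gives $\norm{\white_h^\Phi}{L_2(\Omega;H)} = N_h^{1/2}$, and $N_h\propto h^{-d}$ from Assumption~\ref{ass:Vh}\ref{ass:Vh-1} finishes the proof.

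The only genuine obstacle is the deterministic bound~\eqref{e:err-quad-1}, and within it the quadrature (rather than the truncation) part: locating the strip of analyticity of the resolvent-valued integrand and controlling its size uniformly over the spectrum is the technical core, but this is exactly what~\citep{bonito2015} supplies, so the remaining work is bookkeeping of constants — in particular checking from the $y\to+\infty$ tail that the constant grows only linearly in $\lambda_{1,h}^{-1}$, which, combined with $\lambda_{1,h}\ge\lambda_1$, is what makes the estimate uniform in $h$. The stochastic part is then routine.
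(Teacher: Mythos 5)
Your proposal is correct and follows essentially the same route as the paper: the operator bound~\eqref{e:err-quad-1} is taken from \citep[Lem.~3.4, Thm.~3.5]{bonito2015} (you merely sketch its internals — tail truncation plus sinc-quadrature error — before deferring to that reference), and~\eqref{e:err-quad} then follows exactly as in the paper from $\norm{\Pi_h g}{H}\leq\norm{g}{H}$ and $\bbE[\norm{\white_h^\Phi}{H}^2]=N_h\lesssim h^{-d}$. Your explicit remark that $\lambda_{1,h}\geq\lambda_1$ makes the constant uniform in $h$ is a point the paper leaves implicit, and it is a correct and welcome addition.
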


\begin{proof}
The first assertion is proven in~\citep[][Lem.~3.4, Thm.~3.5]{bonito2015}.
This bound,
$\norm{\Pi_h g}{H} \leq \norm{g}{H}$, and
$\bbE[\norm{\white_h^\Phi}{H}^2] = N_h \lesssim h^{-d}$
imply~\eqref{e:err-quad}.
\end{proof}

%=======================================================================================
\subsection{Proof of Theorem~\ref{thm:strong-conv} and some remarks}\label{subsec:proof-remarks}
%=======================================================================================

After having bounded the truncation, the discretization, and the quadrature errors
in \S\S\ref{subsec:err-trunc}--\ref{subsec:err-quad},
the strong convergence result of Theorem~\ref{thm:strong-conv}
%for the numerical approximation $u_{h,k}^Q$ in~\eqref{e:uQh}
%of the solution $u$ to~\eqref{e:Lbeta}
is an immediate consequence.

\begin{proof}[Proof of Theorem~\ref{thm:strong-conv}]
As outlined at the beginning of the section,
we partition the mean-square error as follows:
\begin{align*}
    \norm{u - u_{h,k}^Q}{L_2(\Omega; H)}
        &\leq \norm{u - u_{N_h}}{L_2(\Omega; H)}
            + \norm{u_{N_h} - u_h^\cE}{L_2(\Omega; H)} \\
        &\qquad + \norm{u_h^\cE - u_h^\Phi}{L_2(\Omega; H)}
            + \norm{u_h^\Phi - u_{h,k}^Q}{L_2(\Omega; H)} \\
        &=: \text{(I)} + \text{(II)} + \text{(III)} + \text{(IV)}.
\end{align*}
By~\eqref{e:err-trunc}, \eqref{e:err-fem}, and \eqref{e:err-quad}
of the Lemmata~\ref{lem:err-trunc}--\ref{lem:err-quad} we have
\begin{align*}
    \text{(I)}  &\lesssim h^{d(\alpha\beta-1/2)} (1 + \norm{g}{H}), \\
    \text{(II)} &\lesssim h^{\min\{d(\alpha\beta-1/2),r,s\}} (1 + \norm{g}{H}), \\
    \text{(IV)} &\lesssim e^{-\pi^2/(2k)} (h^{-d/2} + \norm{g}{H}) \lesssim e^{-\pi^2/(2k)} h^{-d/2} (1 + \norm{g}{H}),
\end{align*}
and by~\eqref{e:uh-equiv} the third term vanishes, $\text{(III)} = 0$.
Thus, the assertion is proven.
\end{proof}

In \S\ref{section:numexp} we will
not only verify the above derived rate of strong convergence
by means of numerical experiments, % in $\bbR^d$ for $d = 1,2,3$,
but also investigate weak type errors.
The following result is proven similarly to
Theorem~\ref{thm:strong-conv}.

\begin{corollary}\label{cor:err-weak}
Let the assumptions of Theorem~\ref{thm:strong-conv} be satisfied.
Then there exists a constant $C>0$ independent of $h\in(0,h_0)$ and $k\in(0,k_0)$
such that the following weak type error estimate
between the approximation $u_{h,k}^Q$ in~\eqref{e:uQh}
and the solution $u$ to~\eqref{e:Lbeta} holds:
\begin{align}\label{e:err-weak}
    \bigl| \norm{u}{L_2(\Omega; H)}^2 &- \norm{u_{h,k}^Q}{L_2(\Omega;H)}^2 \bigr|
         \leq C \bigl (h^{\min\{ d(2\alpha\beta-1), r, s\}} + e^{- \pi^2/(2k) } \bigr) \norm{g}{H}^2  \\
        &\quad + C \bigl( h^{\min\{ d(2\alpha\beta-1), r \}}
            + e^{- \pi^2 / k } h^{-d} + e^{- \pi^2 / (2k)} +  e^{- \pi^2 / (2k)} f_{\alpha,\beta}(h) \bigr), \notag
\end{align}
where $f_{\alpha,\beta}(h) := h^{d(\alpha\beta-1)}$, if $\alpha\beta \neq 1$,
and $f_{\alpha,\beta}(h) := |\ln(h)|$, if $\alpha\beta = 1$.
\end{corollary}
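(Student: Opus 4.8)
The plan is to mimic the strong error decomposition, but now at the level of the \emph{second moments}. Writing $a := \norm{u}{L_2(\Omega;H)}$ and $b := \norm{u_{h,k}^Q}{L_2(\Omega;H)}$, we use the elementary identity $|a^2 - b^2| = |a-b|\,(a+b) \leq \norm{u - u_{h,k}^Q}{L_2(\Omega;H)}\,\bigl(2\norm{u}{L_2(\Omega;H)} + \norm{u - u_{h,k}^Q}{L_2(\Omega;H)}\bigr)$. Since $\norm{u}{L_2(\Omega;H)} \lesssim 1 + \norm{g}{H}$ by Remark~\ref{rem:solreg} and the error bound~\eqref{e:err-strong} is $o(1)$, a crude application of Theorem~\ref{thm:strong-conv} already yields $|a^2 - b^2| \lesssim \bigl(h^{\min\{d(\alpha\beta-1/2),r,s\}} + e^{-\pi^2/(2k)}h^{-d/2}\bigr)(1+\norm{g}{H})^2$. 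The point of Corollary~\ref{cor:err-weak}, however, is that the weak error is \emph{better} than the square of the strong error in several of its terms (note $d(2\alpha\beta-1)$ versus $d(\alpha\beta-1/2)$, and $e^{-\pi^2/(2k)}$ versus $e^{-\pi^2/(2k)}h^{-d/2}$), so the crude bound is not enough and one has to exploit cancellations in the individual terms of the decomposition.

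Concretely, I would expand
\begin{align*}
    \norm{u}{L_2(\Omega;H)}^2 - \norm{u_{h,k}^Q}{L_2(\Omega;H)}^2
    = \sum_{m=1}^{4} \Bigl( \norm{w_{m-1}}{L_2(\Omega;H)}^2 - \norm{w_m}{L_2(\Omega;H)}^2 \Bigr),
\end{align*}
where $w_0 = u$, $w_1 = u_{N_h}$, $w_2 = u_h^\cE$, $w_3 = u_h^\Phi$, $w_4 = u_{h,k}^Q$, and treat each difference $\norm{w_{m-1}}{L_2(\Omega;H)}^2 - \norm{w_m}{L_2(\Omega;H)}^2$ using the polarization bound $|\,\norm{x}{}^2 - \norm{y}{}^2\,| \leq \norm{x-y}{}\,(\norm{x-y}{} + 2\norm{y}{})$. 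For the truncation term ($m=1$): since $u$ and $u_{N_h}$ have \emph{orthogonal} increments in the $L_2(\Omega;H)$ sense (the tail modes $j>N_h$ are independent of the retained ones), one actually has the \emph{exact} identity $\norm{u}{L_2(\Omega;H)}^2 - \norm{u_{N_h}}{L_2(\Omega;H)}^2 = \norm{u - u_{N_h}}{L_2(\Omega;H)}^2 = \sum_{j>N_h}\lambda_j^{-2\beta}(\scalar{g,e_j}{H}^2 + 1) \lesssim h^{d(2\alpha\beta-1)}(1+\norm{g}{H}^2)$ by the computation in the proof of Lemma~\ref{lem:err-trunc}. For the finite element term ($m=2$) I would use the bound from Lemma~\ref{lem:err-fem} for $\norm{u_{N_h} - u_h^\cE}{L_2(\Omega;H)}$ together with the boundedness of $\norm{u_h^\cE}{L_2(\Omega;H)}$; here, to get the improved exponent $d(2\alpha\beta-1)$ in the $\norm{g}{H}^2$-part, one recycles the per-mode estimates (I), (II), (III) from that proof and squares them, tracking which contributions are $O(h^{2s})$, $O(h^{2r})$, $O(h^{2d(\alpha\beta-1/2)})$ — the products of the deterministic (g-driven) and stochastic parts of the increment with $u_h^\cE$ give the mixed-looking terms in the second line of~\eqref{e:err-weak}. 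The $m=3$ term vanishes by Lemma~\ref{lem:white-equiv} / \eqref{e:uh-equiv}. For the quadrature term ($m=4$), the estimate~\eqref{e:err-quad-1} gives $\norm{u_h^\Phi - u_{h,k}^Q}{L_2(\Omega;H)} \lesssim e^{-\pi^2/(2k)}(h^{-d/2} + \norm{g}{H})$, and plugging this into the polarization bound produces both $e^{-\pi^2/k}h^{-d}$ (the square of the stochastic part) and $e^{-\pi^2/(2k)}$ and $e^{-\pi^2/(2k)}f_{\alpha,\beta}(h)$ (cross terms of the stochastic quadrature error with $\norm{u_h^\Phi}{L_2(\Omega;H)} \lesssim (\sum_{j=1}^{N_h}\lambda_{j,h}^{-2\beta}(1+\norm{g}{H}^2))^{1/2}$, where $\sum_{j=1}^{N_h}\lambda_{j,h}^{-2\beta} \lesssim \sum_j j^{-2\alpha\beta}$ may grow or be bounded depending on whether $2\alpha\beta > 1$, and a similar but $\beta$-shifted sum giving $f_{\alpha,\beta}(h)$).

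The main obstacle will be the bookkeeping in the $m=2$ (finite element) term: one has to carefully separate the $\norm{g}{H}^2$-proportional contributions from the $O(1)$ (noise-variance) contributions so that the former get the exponent $\min\{d(2\alpha\beta-1),r,s\}$ and land in the first line of~\eqref{e:err-weak}, while the latter — together with the cross terms between increment and $u_h^\cE$ — get the exponent $\min\{d(2\alpha\beta-1),r\}$ and land in the second line. The subtlety is that the cross term $\langle u_{N_h} - u_h^\cE,\, u_h^\cE\rangle_{L_2(\Omega;H)}$ is not simply bounded by the product of norms in a way that preserves the better exponent; one must instead revisit terms (I)–(III) of the proof of Lemma~\ref{lem:err-fem} and pair each with $u_h^\cE = \sum_j \lambda_{j,h}^{-\beta}(\scalar{g,e_{j,h}}{H} + \xi_j)e_{j,h}$ directly, using $\bbE[\xi_i\xi_j] = \delta_{ij}$ and Cauchy--Schwarz \emph{per mode} rather than globally. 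Everything else is a routine assembly of the already-established lemmas with the polarization inequality.
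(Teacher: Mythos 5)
Your plan follows the paper's own route: telescope the second moments over $u$, $u_{N_h}$, $u_h^\cE$, $u_h^\Phi$, $u_{h,k}^Q$, treat the truncation term exactly as you do, drop the $\cE$/$\Phi$ term by Lemma~\ref{lem:white-equiv}, and handle the remaining terms \emph{spectrally, mode by mode} rather than by global Cauchy--Schwarz --- which is precisely the subtlety you flag for the finite element term. The genuine gap is in your treatment of the quadrature term. As stated, you bound the noise--noise cross term by polarization against $\norm{u_h^\Phi}{L_2(\Omega;H)}$, i.e.\ by $\norm{(Q^\beta_{h,k}-L_h^{-\beta})\white_h^\cE}{L_2(\Omega;H)}\,\norm{L_h^{-\beta}\white_h^\cE}{L_2(\Omega;H)}$. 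Since $\bbE[\norm{\white_h^\cE}{H}^2]=N_h\propto h^{-d}$ while $\sum_{j}\lambda_{j,h}^{-2\beta}=O(1)$ under the standing assumption $2\alpha\beta>1$, this yields $e^{-\pi^2/(2k)}h^{-d/2}$. But for $\alpha\beta>1/2$ one has $h^{-d/2}>h^{d(\alpha\beta-1)}=f_{\alpha,\beta}(h)$, and $e^{-\pi^2/(2k)}h^{-d/2}$ is \emph{not} dominated by any combination of $e^{-\pi^2/k}h^{-d}$, $e^{-\pi^2/(2k)}$, $e^{-\pi^2/(2k)}f_{\alpha,\beta}(h)$ on the right-hand side of~\eqref{e:err-weak} (compare exponents under the calibration $e^{-\pi^2/(2k)}\approx h^{d\alpha\beta}$: you get $d(\alpha\beta-1/2)$, which is strictly smaller than $d(2\alpha\beta-1)$ and than $d\alpha\beta$). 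The fix, and what the paper does, is to expand the cross expectation using the independence of the $\xi_j$,
\begin{align*}
\bbE\bigl[\scalar{(Q^\beta_{h,k}-L_h^{-\beta})\white_h^\cE,\, L_h^{-\beta}\white_h^\cE}{H}\bigr]
= \sum_{j=1}^{N_h}\lambda_{j,h}^{-\beta}\,\scalar{(Q^\beta_{h,k}-L_h^{-\beta})e_{j,h},\, e_{j,h}}{H},
\end{align*}
and apply~\eqref{e:err-quad-1} mode by mode, giving $e^{-\pi^2/(2k)}\sum_{j=1}^{N_h}\lambda_{j,h}^{-\beta}\lesssim e^{-\pi^2/(2k)}\bigl(1+f_{\alpha,\beta}(h)\bigr)$. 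This is where the $\beta$-shifted sum actually enters; attributing it to $\norm{u_h^\Phi}{L_2(\Omega;H)}$, which involves $\sum_j\lambda_{j,h}^{-2\beta}$, cannot produce it.

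For the finite element term your instinct is right but more laborious than necessary: the paper never polarizes there; it writes $\norm{u_{N_h}}{L_2(\Omega;H)}^2$ and $\norm{u_h^\Phi}{L_2(\Omega;H)}^2$ as explicit spectral sums and subtracts, so the eigenvalue and eigenvector errors enter to the \emph{first} power (mean value theorem for $\lambda_j^{-2\beta}-\lambda_{j,h}^{-2\beta}$, and $\scalar{g,e_j-e_{j,h}}{H}\scalar{g,e_j+e_{j,h}}{H}$ for the $g$-part), which is exactly how the exponents $\min\{d(2\alpha\beta-1),r\}$ and $\min\{d(2\alpha\beta-1),s\}$ arise. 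Your squared-increment piece is harmless (it gives $2r,2s$ and $d(2\alpha\beta-1)$), and the cross term can indeed be handled by the per-mode pairing you describe (e.g.\ via $\scalar{e_j,e_{j,h}}{H}=1-\tfrac12\norm{e_j-e_{j,h}}{H}^2$), so that part of your plan is workable; the quadrature cross term is the one place where your argument, as written, would fail to give the stated bound.
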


\begin{proof}
First we note that the norm on $L_2(\Omega;H)$
attains the following values for
$u$ in~\eqref{e:Lbeta},
$u_{N_h}$ in~\eqref{e:uN}, and $u_h^\Phi$ in~\eqref{e:uhPhi}:
\begin{align*}
    \norm{u}{L_2(\Omega;H)}^2
        &= \sum_{j\in\bbN} \lambda_j^{-2\beta} ( 1 + \scalar{g,e_j}{H}^2), \\
    \norm{u_{N_h}}{L_2(\Omega;H)}^2
        &= \sum_{j=1}^{N_h} \lambda_j^{-2\beta} ( 1 + \scalar{g,e_j}{H}^2), \\
    \norm{u_h^\Phi}{L_2(\Omega;H)}^2
        &= \sum_{j=1}^{N_h} \lambda_{j,h}^{-2\beta} ( 1 + \scalar{g,e_{j,h}}{H}^2)
         = \norm{L_h^{-\beta} \Pi_h g}{H}^2 + \norm{L_h^{-\beta} \white_h^\cE}{L_2(\Omega;H)}^2.
%         = \norm{u_h^\cE}{L_2(\Omega;H)}^2.
%    \norm{u_{h,k}^Q}{L_2(\Omega;H)}^2
%        &= \norm{Q_{h,k}^{\beta} \Pi_h g}{H}^2 + \norm{Q_{h,k}^{\beta} \white_h^\Phi}{L_2(\Omega;H)}^2
%         = \norm{Q_{h,k}^{\beta} \Pi_h g}{H}^2 + \norm{Q_{h,k}^{\beta} \white_h^\cE}{L_2(\Omega;H)}^2.
\end{align*}
Again, we partition the error,
\begin{align*}
    \bigl| \norm{u}{L_2(\Omega; H)}^2 &- \norm{u_{h,k}^Q}{L_2(\Omega;H)}^2 \bigr|
        \leq \bigl| \norm{u}{L_2(\Omega; H)}^2 - \norm{u_{N_h}}{L_2(\Omega;H)}^2 \bigr| \\
        &\quad + \bigl| \norm{u_{N_h}}{L_2(\Omega; H)}^2 - \norm{u_h^\Phi}{L_2(\Omega;H)}^2 \bigr|
          + \bigl| \norm{u_h^\Phi}{L_2(\Omega; H)}^2 - \norm{u_{h,k}^Q}{L_2(\Omega;H)}^2 \bigr| \\
        &=: \text{(I)} + \text{(II)} + \text{(III)},
\end{align*}
and bound every term separately.
By applying similar steps as in the proofs
of Lemmata~\ref{lem:err-trunc}--\ref{lem:err-fem},
we obtain for the first two terms:
\begin{align*}
    \text{(I)} &=
        \sum_{j>N_h} \lambda_j^{-2\beta} (1 + \scalar{g,e_j}{H}^2 )
        \lesssim h^{d(2\alpha\beta-1)} (1 + \norm{g}{H}^2), \\
    \text{(II)} &=
        \sum_{j=1}^{N_h} (\lambda_j^{-2\beta} - \lambda_{j,h}^{-2\beta}) ( 1 + \scalar{g,e_{j,h}}{H}^2 )
            + \sum_{j=1}^{N_h} \lambda_j^{-2\beta} \scalar{g,e_{j} - e_{j,h}}{H} \scalar{g,e_j + e_{j,h}}{H} \\
        &\leq 2\beta (1 + \norm{g}{H}^2) \sum_{j=1}^{N_h} \lambda_j^{-2\beta-1} (\lambda_{j,h} - \lambda_j)
            +  2 \norm{g}{H}^2 \sum_{j=1}^{N_h} \lambda_j^{-2\beta} \norm{e_{j} - e_{j,h}}{H} \\
        &\lesssim h^r (1 + \norm{g}{H}^2) \sum_{j=1}^{N_h} \lambda_j^{q-2\beta-1}
            +  h^s \norm{g}{H}^2 \sum_{j=1}^{N_h} \lambda_j^{q/2 - 2\beta} \\
        &\lesssim h^{\min\{ d(2\alpha\beta-1), r \}} (1 + \norm{g}{H}^2)
            +  h^{\min\{ d(2\alpha\beta-1), s\}} \norm{g}{H}^2,
\end{align*}
since $d \alpha (q-1) \leq r$ and $d\alpha q/2 \leq s$ as assumed in~\eqref{e:ass-alpha}.
For the third term we find
\begin{align*}
    \text{(III)} &\leq
        \bigl| \norm{Q_{h,k}^\beta \Pi_h g}{H}^2 - \norm{L_h^{-\beta} \Pi_h g}{H}^2 \bigr|
            + \bigl| \norm{Q_{h,k}^{\beta} \white_h^\cE}{L_2(\Omega;H)}^2 - \norm{L_h^{-\beta} \white_h^\cE}{L_2(\Omega;H)}^2 \bigr| \\
       &=: \text{(IIIa)} + \text{(IIIb)}.
\end{align*}
Since $\norm{L_h^{-\beta}}{\cL(V_h)} = \lambda_{1,h}^{-\beta}$ and
$\norm{Q_{h,k}^\beta}{\cL(V_h)} \lesssim \lambda_{1,h}^{-\beta}$
for sufficiently small $k\in(0,k_0)$, we conclude
with~\eqref{e:err-quad-1} of Lemma~\ref{lem:err-quad} that
\begin{align*}
    \text{(IIIa)}
        &= \bigl| \scalar{(Q_{h,k}^\beta - L_h^{-\beta}) \Pi_h g, (Q_{h,k}^\beta + L_h^{-\beta}) \Pi_h g}{H} \bigr|
%        &\leq \norm{(L_h^{-\beta} - Q_{h,k}^\beta) \Pi_h g}{H}
%            ( \norm{L_h^{-\beta}}{\cL(V_h)} + \norm{Q_{h,k}^{\beta}}{\cL(V_h)} ) \norm{\Pi_h g}{H} \\
         \lesssim e^{-\pi^2/(2k)} \norm{g}{H}^2, \\
%\end{align*}
%%
%and, similarly,
%%
%\begin{align*}
    \text{(IIIb)}
        &= \bigl| \norm{(Q_{h,k}^\beta - L_h^{-\beta}) \white_h^\cE }{L_2(\Omega;H)}^2
            + 2 \, \scalar{(Q_{h,k}^\beta - L_h^{-\beta}) \white_h^\cE, L_h^{-\beta} \white_h^\cE}{L_2(\Omega; H)} \bigr| \\
        &\leq \sum_{j=1}^{N_h} \norm{(Q_{h,k}^\beta - L_h^{\beta}) e_{j,h} }{H}^2
            + 2 \, \Bigl| \sum_{j=1}^{N_h} \lambda_{j,h}^{-\beta} \scalar{(Q_{h,k}^\beta - L_h^{\beta}) e_{j,h}, e_{j,h} }{H} \Bigr| \\
        &\lesssim e^{-\pi^2/k} N_h + e^{-\pi^2/(2k)}  \sum_{j=1}^{N_h} \lambda_{j,h}^{-\beta}
%        &\lesssim e^{-\pi^2/k} N_h + e^{-\pi^2/(2k)} (1 + N_h^{1-\alpha\beta}) \\
         \lesssim e^{-\pi^2/k} h^{-d} + e^{-\pi^2/(2k)} (1 + f_{\alpha,\beta}(h) ),
\end{align*}
where we have used Assumption~\ref{ass:Vh} in the last estimate.
%
%\begin{align*}
%    \text{(III)} &\leq
%        \norm{u_h^\Phi - u_{h,k}^Q}{L_2(\Omega;H)} ( \norm{u_h^\Phi}{L_2(\Omega; H)} + \norm{u_{h,k}^Q}{L_2(\Omega;H)} ) \\
%        &\leq \norm{u_h^\Phi - u_{h,k}^Q}{L_2(\Omega;H)} ( \norm{L_h^{-\beta}}{\cL(V_h)} + \norm{Q_{h,k}^\beta}{\cL(V_h)} ) \norm{\white_h^\Phi + \Pi_h g}{L_2(\Omega;H)}.
%\end{align*}
%Since $\norm{L_h^{-\beta}}{\cL(V_h)} = \lambda_{1,h}^{-\beta}$,
%$\norm{Q_{h,k}^\beta}{\cL(V_h)} \lesssim \lambda_{1,h}^{-\beta}$
%for sufficiently small $k\in(0,k_0)$, and
%\begin{align*}
%    \norm{\white_h^\Phi + \Pi_h g}{L_2(\Omega;H)} \leq \norm{\white_h^\Phi}{L_2(\Omega;H)} + \norm{\Pi_h g}{H} \leq h^{-d/2} + \norm{g}{H},
%\end{align*}
%we conclude with~\eqref{e:err-quad} in Lemma~\ref{lem:err-quad} that
%\begin{align*}
%    \text{(III)}
%        &\lesssim \norm{u_h^\Phi - u_{h,k}^Q}{L_2(\Omega;H)} (h^{-d/2} + \norm{g}{H})
%         \lesssim e^{-\pi^2/(2k)} (h^{-d}
%         + \norm{g}{H}^2)
%\end{align*}
This proves~\eqref{e:err-weak}.
\end{proof}

\begin{table}[t]
  \centering
  \caption{\label{tab:rocs}Theoretical strong and weak type convergence rates}
  \begin{tabular}{lcc}
  \toprule
                                    & calibration & rate of convergence \\
  \cmidrule(r){2-3}
  strong error~\eqref{e:err-strong}  & \parbox[0pt][2em][c]{0cm}{} $k\leq -\tfrac{\pi^2}{2d\alpha\beta \ln(h)}$ & $\min\{d(\alpha\beta-1/2),r,s\}$ \\
  weak type error~\eqref{e:err-weak} & \parbox[0pt][4em][c]{0cm}{} $k\leq -\tfrac{\pi^2}{2 K_{\alpha,\beta}(h) }$
                                                                                            & $\begin{cases} \min\{d(2\alpha\beta-1),r\} & \text{if }g=0 \\
                                                                                               \min\{d(2\alpha\beta-1),r,s\} & \text{otherwise} \end{cases}$ \\
  \bottomrule
  \end{tabular}
%  \vspace{0.8\baselineskip}
%  \caption{Theoretical strong and weak type convergence rates}\label{tab:rocs}
\end{table}

\begin{remark}\label{rem:calibrate-hk}
The error estimates in~\eqref{e:err-strong} and~\eqref{e:err-weak}
imply that
the distance of the quadrature nodes $k$
has to be adjusted to
the finite element mesh size $h$.
Table~\ref{tab:rocs} shows
the calibration between $h$ and $k$
for error studies of strong and weak type
as well as the corresponding
theoretical convergence rates.
For the calibration, we set
\begin{align*}
    K_{\alpha,\beta}(h)
        := \begin{cases}
            d\alpha\beta \ln(h), &  \alpha\beta < 1, \\
            d \ln(h) - \max\{0, \ln(|\ln(h)|)\}, & \alpha\beta = 1, \\
            d(2\alpha\beta - 1) \ln(h), & \alpha\beta > 1.
            \end{cases}
\end{align*}
\end{remark}

\begin{remark}\label{rem:beta>1}
In the non-fractional case $\beta = 1$,
the discretized problem~\eqref{e:uhPhi}
is also non-fractional.
Therefore, realizations of its solution $u_h^\Phi = L_h^{-1}(\Pi_h g + \white_h^\Phi)$
can be computed directly and no quadrature is needed.
If $\beta \in (n,n+1]$ for some $n\in\bbN$,
one may apply the above described method and error estimates
to $\widetilde{L} := L^{n+1}$ and
$\widetilde{\beta} := \tfrac{\beta}{n+1} \in (0,1]$,
since $L^\beta = \widetilde{L}^{\widetilde{\beta}}$.
Yet, the finite element theory for the operator $\widetilde{L}$ may not be trivial.
\end{remark}

%=======================================================================================
\section{An application and numerical examples}\label{section:numexp}
%=======================================================================================

In the following numerical experiment
we take up the SPDE from~\eqref{e:statmodel}
in \S\ref{section:intro}.
More precisely,
with the objective of generating
computationally efficient approximations of
Gaussian Mat\'ern fields
on the unit cube $\cD := (0,1)^d$
in $d=1,2,3$ spatial dimensions,
we consider the following problem:
\begin{subequations}\label{e:numexp}
\begin{align}
\hspace*{2cm}
        (\kappa^2 - \Delta)^{\beta} u(\xvec) &= \white(\xvec), && \xvec \in \cD,  \label{e:numexp-a} \hspace*{2cm}\\
        u(\xvec)                             &= 0,             && \xvec \in \partial \cD, \label{e:numexp-b}
\end{align}
\end{subequations}
and study the above presented numerical method generating the approximation $u_{h,k}^Q$ in~\eqref{e:uQh}.
As already observed in Example~\ref{ex:eig:laplace}, the exponent of the eigenvalue growth
is given by $\alpha = 2/d$ in this case.

Furthermore, using a finite element discretization on uniform meshes
with continuous, piecewise linear basis functions,
Assumption~\ref{ass:Vh} is satisfied for this problem
in all three dimensions
with $r=s=q=2$, see Example~\ref{ex:ass:Vh}.
The condition in~\eqref{e:ass-alpha} of Theorem~\ref{thm:strong-conv}
becomes $\beta > d/4$. We emphasize that this
assumption is meaningful also from the statistical
point of view: on all of $\bbR^d$, $\beta > d/4$ corresponds to
a positive smoothness parameter
$\nu > 0$ of the Mat\'{e}rn covariance function~\eqref{e:materncov}.

Thus, if the quadrature step size $k$
and the finite element mesh width $h$ are calibrated
as indicated in Table~\ref{tab:rocs} in \S\ref{subsec:proof-remarks},
the theoretical rates of convergence
for $\beta \in (d/4,1)$ are $2\beta - d/2$ for the strong error
and $\min\{4\beta-d,2\}$ for the weak type error
according to Theorem~\ref{thm:strong-conv} and Corollary~\ref{cor:err-weak}.

\pagebreak

\begin{table}[t]
\centering
\caption{\label{tab:nodenumber}Numbers of finite element basis functions on the considered meshes for $d=1,2,3$ as well as
the corresponding numbers of quadrature nodes as a function of $\beta$ for the strong error study}
\begin{tabular}{lc|ccccc}
\toprule
  &     & \multicolumn{5}{c}{$\beta$}\\
  & $N_h$ & $3/8$  & $4/8$ & $5/8$ & $6/8$ & $7/8$\\
\cmidrule(r){2-7}
\multirow{4}{*}{$d=1$}  & 127   & 37  &  61  &  99  & 176  & 408 \\
                        & 255   & 48  &  77  & 129  & 229  & 533 \\
                        & 511   & 60  &  99  & 163  & 291  & 675 \\
                        & 1023  & 73  & 121  & 200  & 357  & 832 \\ \cmidrule(lr){1-7}
\multirow{4}{*}{$d=2$}  & 961   & -   & -    & 43   &  75  & 171 \\
                        & 3969  & -   & -    & 62   & 109  & 253 \\
                        & 16129 & -   & -    & 86   & 152  & 352 \\
                        & 65025 & -   & -    & 113  & 203  & 469 \\ \cmidrule(lr){1-7}
\multirow{3}{*}{$d=3$}  & 729   & -   & -    & -    & -    & 55 \\
                        & 6859  & -   & -    & -    & -    & 105 \\
                        & 59319 & -   & -    & -    & -    & 172 \\
\bottomrule
\end{tabular}
%\vspace{0.8\baselineskip}
%\caption{Numbers of finite element basis functions on the considered meshes for $d=1,2,3$ as well as
%the corresponding numbers of quadrature nodes as a function of $\beta$ for the strong error study}
\end{table}

For Problem~\eqref{e:numexp} with $\kappa=0.5$,
we investigate the empirical convergence rates
\begin{enumerate}
\item\label{enum:numexp:i} of the strong error for $d=1,2,3$ and $\beta = \tfrac{2d + n}{8}$, $n\in\{1,\ldots,7-2d\}$;
\item\label{enum:numexp:ii} of the weak type error for $d=1,2,3$
    and $\beta = \tfrac{2d + 1}{8}$.
\end{enumerate}
In each dimension, we use a finite element method
in space with continuous, piecewise linear basis functions
on uniform meshes with mesh
diameter $h$, mesh nodes
$\xvec_1,\ldots, \xvec_{N_h}$
and corresponding mass matrix $\Mmat$.
We choose $k = -1/(\beta\ln h)$. The numbers of finite element basis functions
and the corresponding numbers of quadrature nodes
depending on $\beta$ used in the strong error study are shown in Table~\ref{tab:nodenumber}.

For~\ref{enum:numexp:i}, measuring the strong mean-square error
between the exact solution $u$ to our model problem~\eqref{e:numexp}
and the approximation $u_{h,k}^Q$ in~\eqref{e:uQh},
we proceed as follows:
First, samples of an overkill approximation
of the white noise
\begin{align*}
    \whiteok := \sum_{\theta_1=1}^{\Nok} \ldots \sum_{\theta_d=1}^{\Nok}
            \xi_{(\theta_1,\ldots,\theta_d)} e_{(\theta_1,\ldots,\theta_d)}
\end{align*}
are generated and
evaluated on a uniform overkill mesh of $\bar{\cD}$ with $\Nok^d$ nodes.
Here, $\{\xi_{\thetavec}\}$ are independent standard normally distributed random variables
and $\{e_{\thetavec}\}$ are the eigenfunctions
in~\eqref{e:eigfct:laplace}.
The approximation $\whiteok$ corresponds to a truncated Karhunen--Lo\`{e}ve expansion
with $\Nok^d$ terms.
From this, samples of the overkill solution $u_{\mathrm{ok}}$
are obtained via
\begin{align*}
    u_{\mathrm{ok}} := (\kappa^2 - \Delta)^{-\beta} \whiteok =
            \sum_{\theta_1=1}^{\Nok} \ldots \sum_{\theta_d=1}^{\Nok}
                \lambda_{(\theta_1,\ldots,\theta_d)}^{-\beta} \xi_{(\theta_1,\ldots,\theta_d)} e_{(\theta_1,\ldots,\theta_d)},
\end{align*}
where $\{\lambda_{\thetavec}\}$ are the eigenvalues
from~\eqref{e:eigval:laplace}.
For the sake of generating comparable samples of the approximation
$u_{h,k}^Q$, we consider the same realizations of $\whiteok$
and use the load vector $\widetilde{\bvec}$ with entries
$\widetilde{b}_i = \scalar{\whiteok, \phi_{i,h}}{L_2(\cD)}$
instead of $\bvec \sim \cN(\mathbf{0},\Mmat)$ %with $b_i = \scalar{\white_h^\Phi, \phi_{i,h}}{L_2(\cD)}$
from Remark~\ref{rem:rhs-easysample},
as $(\scalar{\white,\phi_{i,h}}{L_2(\cD)})_{i=1}^{N_h} \sim \cN(\mathbf{0},\Mmat)$
and we treat $\whiteok$ as the true white noise.
The resulting approximation is denoted by $\widetilde{u}_{h,k}$.
We choose $\Nok = 2^{18} + 1$ for $d=1$,
$\Nok = 2^{12} + 1$ for $d=2$, and
$\Nok = 5 \cdot 2^{6} + 1$ for $d=3$.

\begin{figure}[t]
\begin{center}
\begin{minipage}[t]{0.476\linewidth}
\begin{center}
$d = 1$
\includegraphics[width=\linewidth]{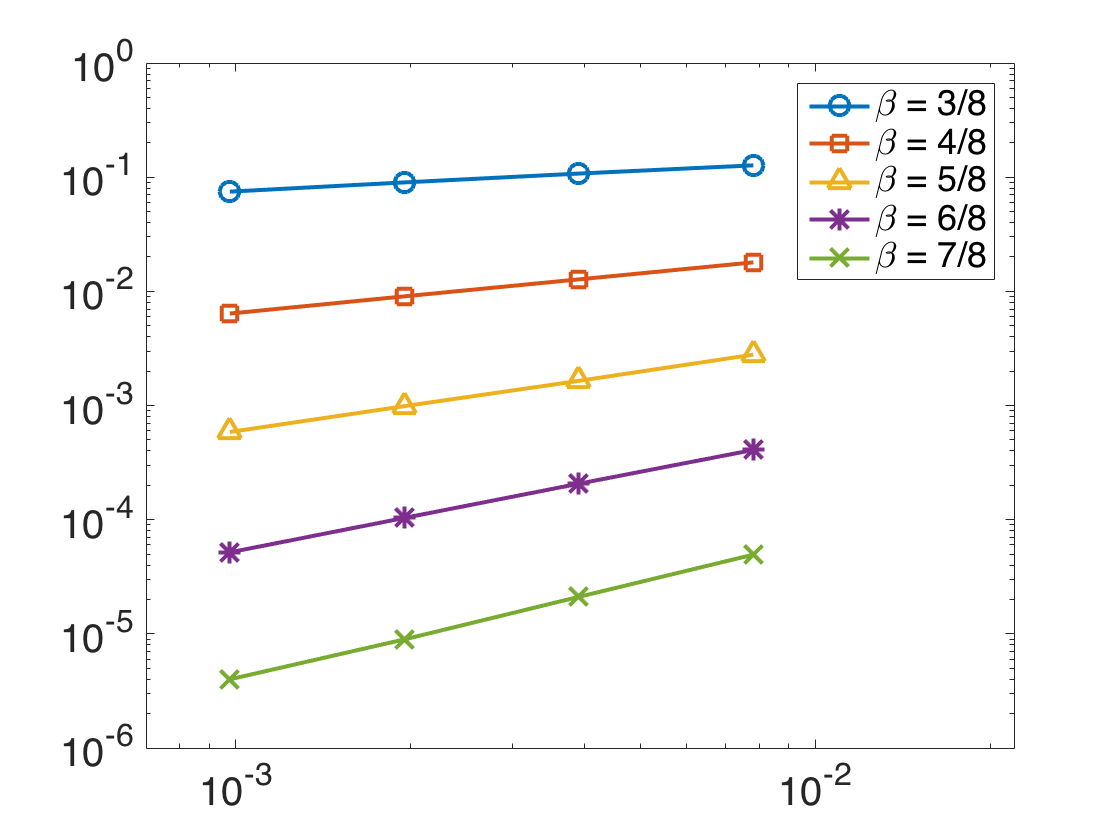}
\end{center}
\end{minipage}
\begin{minipage}[t]{0.476\linewidth}
\begin{center}
$d = 2$
\includegraphics[width=\linewidth]{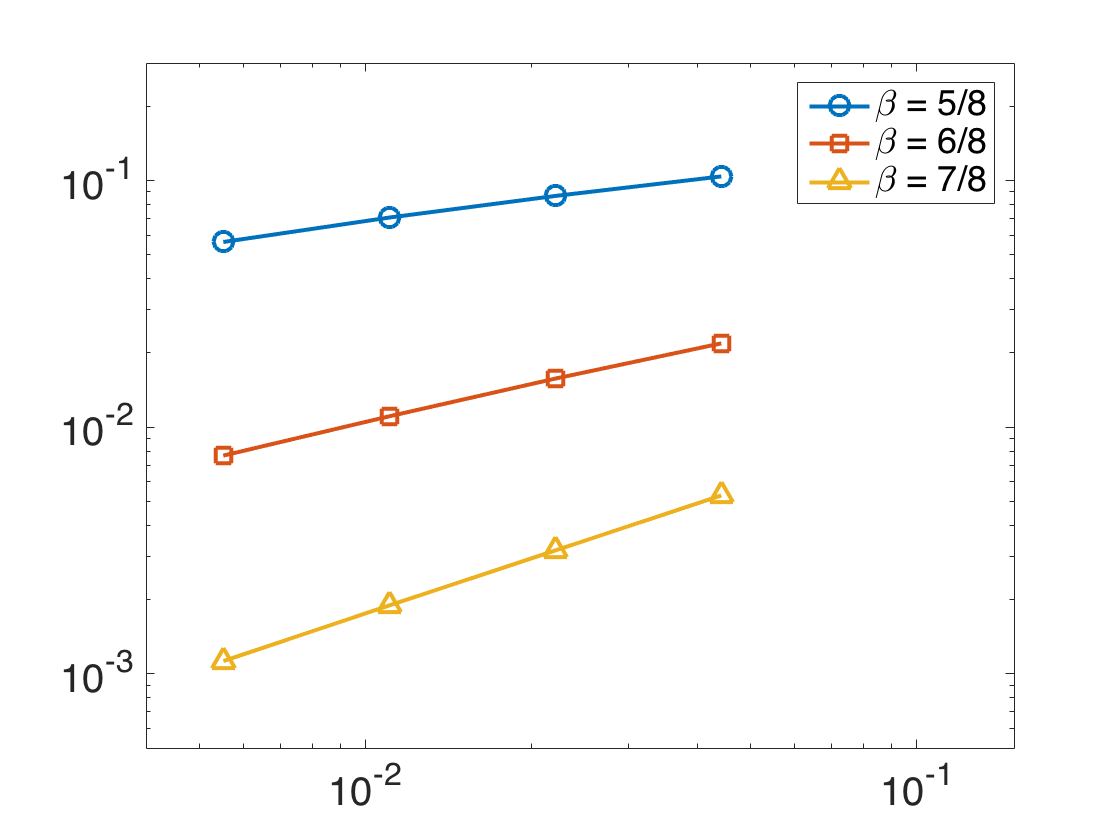}
\end{center}
\end{minipage}
\begin{minipage}[t]{0.476\linewidth}
\begin{center}
$\beta = 7/8$
\includegraphics[width=\linewidth]{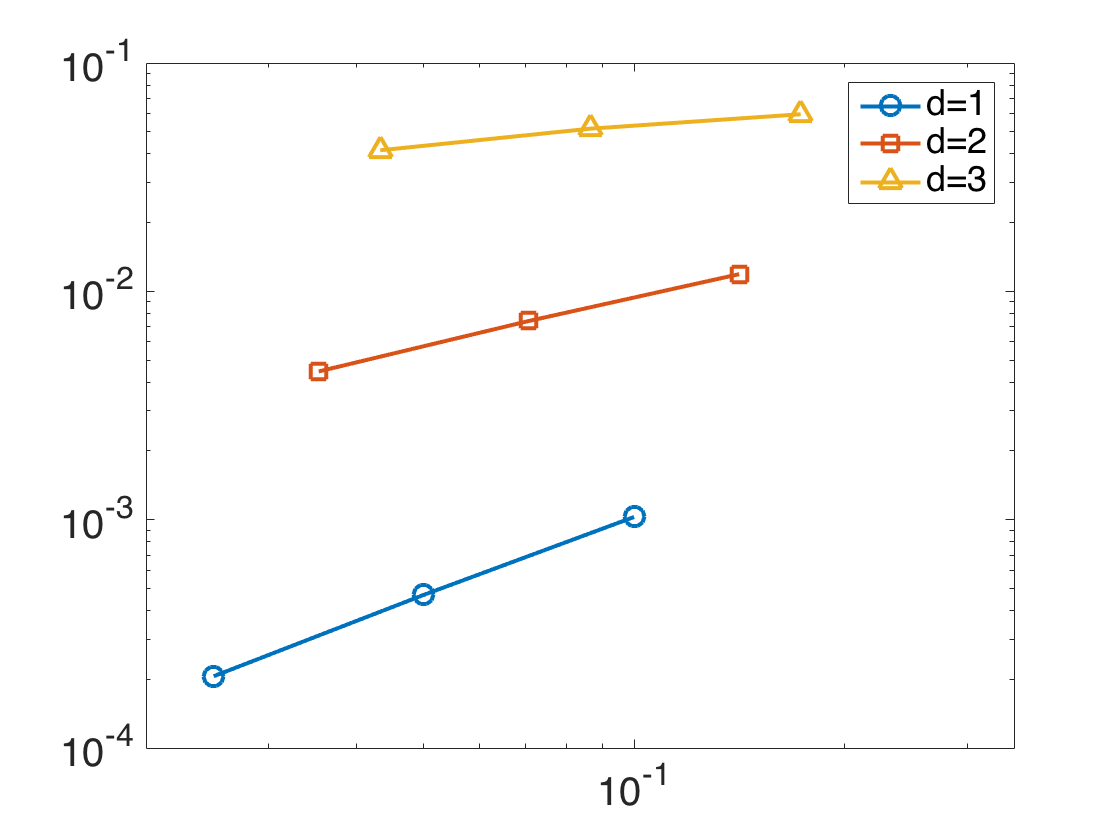}
\end{center}
\end{minipage}
\begin{minipage}[t]{0.476\linewidth}
\begin{center}
weak type error
\includegraphics[width=\linewidth]{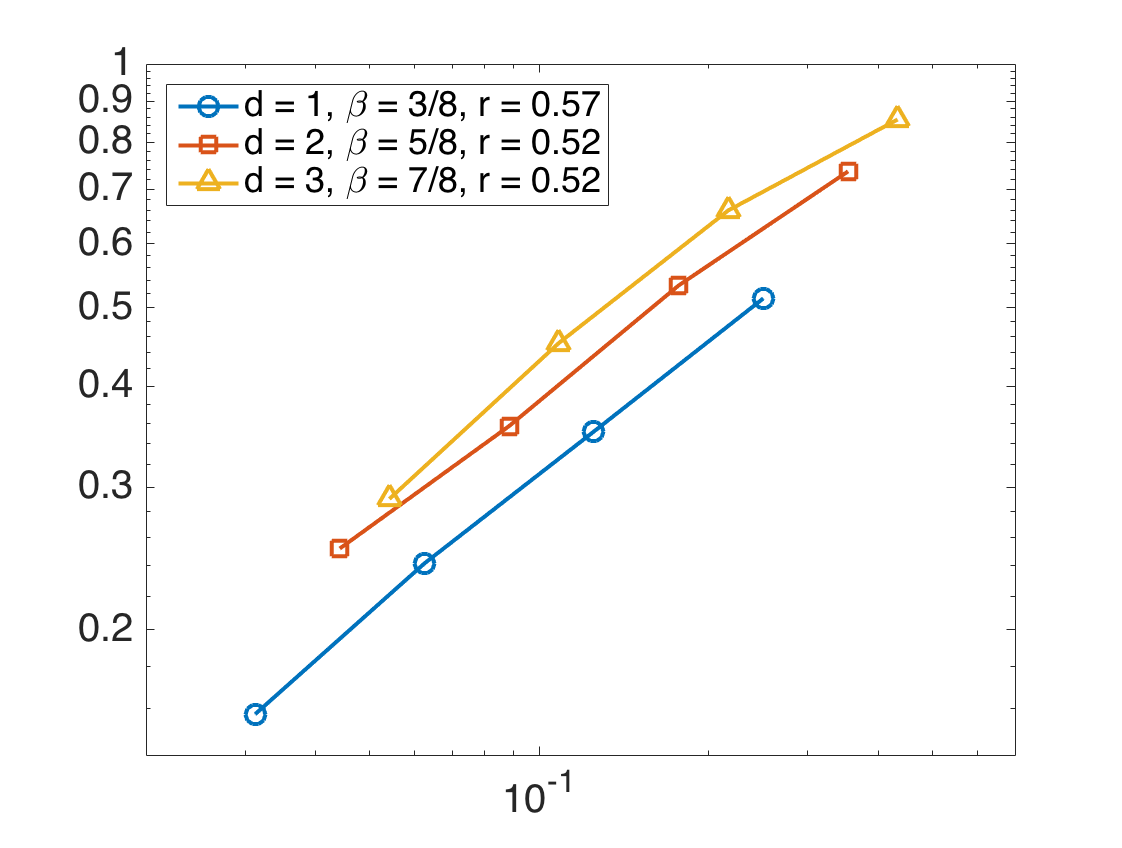}
\end{center}
\end{minipage}
\end{center}
\caption{The panels in the first row show the observed strong error %$\norm{u_{\mathrm{ok}} - u_{h,k}^Q}{L_2(\cD)}$
for different values of $\beta$ and $d=1,2$.
In the third graph the observed strong error for $\beta = 7/8$ and $d=1,2,3$ is displayed.
The corresponding observed strong convergence rates are shown in Table~\ref{tab:rates}.
The final graph shows the observed average weak type errors for three experiments in $d=1,2,3$ dimensions.
For all of them the theoretical rate of convergence is $0.5$ and the observed rates are shown in the legend.
All errors are shown as functions of the mesh size $h$ used in the computations in a log-log scale.}
\label{fig:errors}
\end{figure}

For each value of $\beta$ and in every spatial dimension,
we use $50$ samples of $\whiteok$ to generate samples
$u_{\mathrm{ok}}^{(1)}, \ldots, u_{\mathrm{ok}}^{(50)}$
of the overkill solution and of the numerical approximation
$\widetilde{u}_{h,k}^{(1)}, \ldots,\widetilde{u}_{h,k}^{(50)}$
for every mesh size $h$.
The observed strong errors are then computed as
the average $L_2$-errors
\begin{align*}
    \mathrm{err} := \tfrac{1}{50} \sum_{i=1}^{50} \sqrt{ \bigl(\mathbf{v}^{(i)}\bigr)^T
        \Mmat \, \mathbf{v}^{(i)}}, \qquad
    v^{(i)}_j := u_{\mathrm{ok}}^{(i)}(\xvec_j) - \widetilde{u}_{h,k}^{(i)}(\xvec_j).
\end{align*}
The results are shown in the first three panels of Figure~\ref{fig:errors}.
The data set $\{(h_\ell, \mathrm{err}_\ell)\}_\ell$
is then used to compute the observed rate of convergence
$\mathrm{r}$ as the least-squares solution to the linear regression
$\ln \mathrm{err} = \mathrm{c} + \mathrm{r} \ln h$
for each combination of $(d,\beta)$.
As shown in Table~\ref{tab:rates},
the resulting observed rates of convergence
are in accordance with the theoretical values predicted
by Theorem~\ref{thm:strong-conv}.

For \ref{enum:numexp:ii} we study the weak type error
$|\bbE[\norm{u}{L_2(\cD)}^2] - \bbE[\norm{u_{h,k}^Q}{L_2(\cD)}^2]|$
addressed in Corollary~\ref{cor:err-weak}.
Note that for this study no sample-wise comparison is needed
and, thus, the load vector is sampled from $\bvec \sim \cN(\mathbf{0},\Mmat)$
as discussed in Remark~\ref{rem:rhs-easysample}.
The variance of the exact solution can be computed directly
from the known eigenvalues of the differential operator as
$\bbE[\norm{u}{L_2(\cD)}^2] = \sum_{\thetavec} \lambda_{\thetavec}^{-2\beta}$.
The variance of $u_{h,k}^Q$ is approximated
via Monte Carlo integration by
\begin{align*}
    \bbE[\norm{u_{h,k}^Q}{L_2(\cD)}^2] \approx
        \tfrac{1}{N_{\mathrm{MC}}} \sum_{i=1}^{N_{\mathrm{MC}}}
        \bigl( \mathbf{u}^{(i)} \bigr)^{T} \Mmat \, \mathbf{u}^{(i)},
        \qquad
        u_j^{(i)} := u_{h,k}^{(i)}(\xvec_j),
\end{align*}
where the number of Monte Carlo samples is $N_{\mathrm{MC}} = 10^3$
and $u_{h,k}^{(i)}$ denotes a realization
of the numerical approximation $u_{h,k}^Q$.
The observed weak type errors and
the observed rates of convergence
are displayed in the fourth panel of Figure~\ref{fig:errors}.
The theoretical rate of convergence
predicted by Corollary~\ref{cor:err-weak}
is 0.5 for all three cases.

\begin{table}[t]
\centering
\caption{\label{tab:rates}Observed (resp.~theoretical) rates of convergence for the strong errors shown in Figure~\ref{fig:errors}}
\begin{tabular}{lccccc}
\toprule
&  \multicolumn{5}{c}{$\beta$}\\
&  $3/8$  & $4/8$ & $5/8$ & $6/8$ & $7/8$\\
\cmidrule(r){2-6}
$d=1$	&  0.25 (0.25) 	&  0.50 (0.5) 	&  0.75 (0.75) 	& 1.00 (1) 	& 1.21 (1.25)\\
$d=2$	& - 			& -    		&  0.29 (0.25)  	& 0.51 (0.5) 	&  0.74 (0.75)\\
$d=3$	& - 			& -    		& -  			& -  			&  0.26 (0.25)\\
\bottomrule
\end{tabular}%
%\vspace{0.8\baselineskip}
%\caption{Observed (resp.~theoretical) rates of convergence for the strong errors shown in Figure~\ref{fig:errors}}
\end{table}

%=======================================================================================
\section{Conclusion}\label{section:conclusion}
%=======================================================================================

We have considered the fractional order equation~\eqref{e:Lbeta}
with Gaussian white noise in a Hilbert space setting.
We have shown that the fractional operator $L^\beta$
is an isometric isomorphism %as a mapping
between the $\dot{H}$-spaces in~\eqref{e:Hs-embed}.
From this result and the mean-square regularity
of the white noise with respect to the $\dot{H}$-spaces,
we have deduced existence and uniqueness
of a solution $u$ to~\eqref{e:Lbeta}
with a certain regularity. %mean-square regularity.

We have proposed the approximation $u_{h,k}^Q$ in~\eqref{e:uQh}
based on two numerical ingredients:
\begin{enumerate*}
\item finite-dimensional subspaces $V_h$ of $\Hdot{1}$,
and \item the quadrature approximation~\eqref{e:Qhbeta}
of the inverse fractional operator.
\end{enumerate*}
The most advantageous and novel properties
of the corresponding numerical scheme
are
\begin{enumerate*}[label=(\alph*)]
\item that only solutions
    to integer order (i.e., local) elliptic equations have to be computed, and
\item that
    it does not require
    the knowledge of the eigenfunctions
    of the differential operator $L$.
\end{enumerate*}

Our main result, Theorem~\ref{thm:strong-conv},
shows strong mean-square
convergence of the approximation $u_{h,k}^Q$
to the exact solution $u$.
If the quadrature step size~$k$
and the finite element mesh width~$h$
are calibrated appropriately, see Table~\ref{tab:rocs},
the resulting strong convergence rate
depends only on the fractional order~$\beta$,
the dimension~$d$,
the eigenvalue growth~$\alpha$ of
the operator $L$, and
the approximation properties
of the finite element spaces.
In order to prove this result,
we have partitioned the strong error
in three terms: the truncation error,
the error caused by the finite
element discretization,
and the error of the quadrature approximation.
We have derived bounds for each
of these error terms separately
in \S\S\ref{subsec:err-trunc}--\ref{subsec:err-quad}.
By means of similar techniques,
we have proven a weak type error estimate in Corollary~\ref{cor:err-weak}.

Finally, in \S\ref{section:numexp}
we have applied the proposed numerical method
to an explicit problem with relevance for spatial statistics:
the solution $u$ to~\eqref{e:numexp}
can be regarded as an approximation
of a Gaussian Mat\'{e}rn field
on the unit cube $(0,1)^d$.
The performed numerical experiments
with continuous, piecewise linear
finite element basis functions
in $d=1,2,3$ spatial dimensions
verify the derived theoretical strong
and weak type convergence rates,
see Figure~\ref{fig:errors} and Table~\ref{tab:rates}.

We hope that these results and insights will prove
valuable for applications in spatial
statistics, which often require sampling from (approximations of)
Gaussian Mat\'{e}rn fields and their various extensions.

\bibliographystyle{IMANUM-BIB}
\bibliography{bkk_bib}

\begin{thebibliography}{}

\bibitem[Babuska {\em et~al.}(2004)Babuska, Tempone, \& Zouraris]{babuska2004}
{\sc Babuska, I., Tempone, R. \& Zouraris, G.~E.} (2004)
\newblock Galerkin finite element approximations of stochastic elliptic partial
  differential equations.
\newblock {\em SIAM J.\ Numer.\ Anal.}, {\bf 42}, 800--825.

\bibitem[Baeumer {\em et~al.}(2015)Baeumer, Kov\'acs, \&
  Sankaranarayanan]{baeumer2015}
{\sc Baeumer, B., Kov\'acs, M. \& Sankaranarayanan, H.} (2015)
\newblock Higher order {G}r\"unwald approximations of fractional derivatives
  and fractional powers of operators.
\newblock {\em Trans.\ Amer.\ Math.\ Soc.}, {\bf 367}, 813--834.

\bibitem[Balakrishnan(1960)Balakrishnan]{balakrishnan1960}
{\sc Balakrishnan, A.~V.} (1960)
\newblock Fractional powers of closed operators and the semigroups generated by
  them.
\newblock {\em Pacific J.\ Math.}, {\bf 10}, 419--437.

\bibitem[Banerjee {\em et~al.}(2008)Banerjee, Gelfand, Finley, \&
  Sang]{banerjee2008gaussian}
{\sc Banerjee, S., Gelfand, A.~E., Finley, A.~O. \& Sang, H.} (2008)
\newblock Gaussian predictive process models for large spatial data sets.
\newblock {\em J.\ R.\ Stat.\ Soc.\ Series B Stat.\ Methodol.}, {\bf 70},
  825--848.

\bibitem[Bhatt {\em et~al.}(2015)Bhatt, Weiss, Cameron, Bisanzio, Mappin,
  Dalrymple, Battle, Moyes, Henry, Eckhoff, {\em et~al.}]{bhatt2015effect}
{\sc Bhatt, S., Weiss, D., Cameron, E., Bisanzio, D., Mappin, B., Dalrymple,
  U., Battle, K., Moyes, C., Henry, A., Eckhoff, P. {\em et~al.}} (2015)
\newblock The effect of malaria control on {P}lasmodium falciparum in {A}frica
  between 2000 and 2015.
\newblock {\em Nature\/}, {\bf 526}, 207--211.

\bibitem[Bolin(2014)Bolin]{bolin14}
{\sc Bolin, D.} (2014)
\newblock Spatial {M}at\'ern fields driven by non-{G}aussian noise.
\newblock {\em Scand.\ J.\ Stat.}, {\bf 41}, 557--579.

\bibitem[Bolin \& Lindgren(2011)Bolin \& Lindgren]{bolin11}
{\sc Bolin, D. \& Lindgren, F.} (2011)
\newblock Spatial models generated by nested stochastic partial differential
  equations, with an application to global ozone mapping.
\newblock {\em Ann.\ Appl.\ Stat.}, {\bf 5}, 523--550.

\bibitem[Bonito {\em et~al.}(2017)Bonito, Lei, \& Pasciak]{bonito2017}
{\sc Bonito, A., Lei, W. \& Pasciak, J.~E.} (2017)
\newblock The approximation of parabolic equations involving fractional powers
  of elliptic operators.
\newblock {\em J.\ Comp.\ Appl.\ Math.}, {\bf 315}, 32--48.

\bibitem[Bonito \& Pasciak(2015)Bonito \& Pasciak]{bonito2015}
{\sc Bonito, A. \& Pasciak, J.~E.} (2015)
\newblock Numerical approximation of fractional powers of elliptic operators.
\newblock {\em Math.\ Comp.}, {\bf 84}, 2083--2110.

\bibitem[Caffarelli \& Silvestre(2007)Caffarelli \& Silvestre]{caffarelli2007}
{\sc Caffarelli, L. \& Silvestre, L.} (2007)
\newblock An extension problem related to the fractional {L}aplacian.
\newblock {\em Comm.\ Partial Differential Equations\/}, {\bf 32}, 1245--1260.

\bibitem[Cao {\em et~al.}(2007)Cao, Yang, \& Yin]{cao2007}
{\sc Cao, Y., Yang, H. \& Yin, L.} (2007)
\newblock Finite element methods for semilinear elliptic stochastic partial
  differential equations.
\newblock {\em Numerische Mathematik\/}, {\bf 106}, 181--198.

\bibitem[Courant \& Hilbert(1962)Courant \& Hilbert]{courant1962}
{\sc Courant, R. \& Hilbert, D.} (1962)
\newblock {\em Methods of Mathematical Physics: Vol. I\/}.
\newblock Wiley Classics Library.
\newblock Interscience Publishers.

\bibitem[D{\"o}lz {\em et~al.}(2017)D{\"o}lz, Harbrecht, \& Schwab]{Dolz2017}
{\sc D{\"o}lz, J., Harbrecht, H. \& Schwab, C.} (2017)
\newblock Covariance regularity and $\mathcal{H}$-matrix approximation for
  rough random fields.
\newblock {\em Numerische Mathematik\/}, {\bf 135}, 1045--1071.

\bibitem[Du \& Zhang(2003)Du \& Zhang]{du2003}
{\sc Du, Q. \& Zhang, T.} (2003)
\newblock Numerical approximation of some linear stochastic partial
  differential equations driven by special additive noises.
\newblock {\em SIAM J.\ Numer.\ Anal.}, {\bf 40}, 1421--1445.

\bibitem[Fuglstad {\em et~al.}(2015)Fuglstad, Lindgren, Simpson, \&
  Rue]{fuglstad2015}
{\sc Fuglstad, G.-A., Lindgren, F., Simpson, D. \& Rue, H.} (2015)
\newblock Exploring a new class of non-stationary spatial {G}aussian random
  fields with varying local anisotropy.
\newblock {\em Stat.\ Sin.}, 115--133.

\bibitem[Furrer {\em et~al.}(2006)Furrer, Genton, \&
  Nychka]{furrer2006covariance}
{\sc Furrer, R., Genton, M.~G. \& Nychka, D.} (2006)
\newblock Covariance tapering for interpolation of large spatial datasets.
\newblock {\em J.\ Comput.\ Graph.\ Stat.}, {\bf 15}, 502--523.

\bibitem[Gavrilyuk(1996)Gavrilyuk]{gavrilyuk1996}
{\sc Gavrilyuk, I.~P.} (1996)
\newblock An algorithmic representation of fractional powers of positive
  operators.
\newblock {\em Numer.\ Funct.\ Anal.\ Optim.}, {\bf 17}, 293--305.

\bibitem[Gavrilyuk {\em et~al.}(2004)Gavrilyuk, Hackbusch, \&
  Khoromskij]{gavrilyuk2004}
{\sc Gavrilyuk, I.~P., Hackbusch, W. \& Khoromskij, B.~N.} (2004)
\newblock Data-sparse approximation to the operator-valued functions of
  elliptic operator.
\newblock {\em Math.\ Comp.}, {\bf 73}, 1297--1324.

\bibitem[Gavrilyuk {\em et~al.}(2005)Gavrilyuk, Hackbusch, \&
  Khoromskij]{gavrilyuk2005}
{\sc Gavrilyuk, I.~P., Hackbusch, W. \& Khoromskij, B.~N.} (2005)
\newblock Hierarchical tensor-product approximation to the inverse and related
  operators for high-dimensional elliptic problems.
\newblock {\em Computing\/}, {\bf 74}, 131--157.

\bibitem[Gy\"{o}ngy \& Mart\'{i}nez(2006)Gy\"{o}ngy \&
  Mart\'{i}nez]{gyongy2006}
{\sc Gy\"{o}ngy, I. \& Mart\'{i}nez, T.} (2006)
\newblock On numerical solution of stochastic partial differential equations of
  elliptic type.
\newblock {\em Stochastics\/}, {\bf 78}, 213--231.

\bibitem[Jin {\em et~al.}(2015)Jin, Lazarov, Pasciak, \& Rundell]{jin2015}
{\sc Jin, B., Lazarov, R., Pasciak, J. \& Rundell, W.} (2015)
\newblock Variational formulation of problems involving fractional order
  differential operators.
\newblock {\em Math.\ Comp.}, {\bf 84}, 2665--2700.

\bibitem[Kov\'acs {\em et~al.}(2011)Kov\'acs, Lindgren, \& Larsson]{kovacs2011}
{\sc Kov\'acs, M., Lindgren, F. \& Larsson, S.} (2011)
\newblock Spatial approximation of stochastic convolutions.
\newblock {\em J. Comput. Appl. Math.}, {\bf 235}, 3554--3570.

\bibitem[Lai {\em et~al.}(2015)Lai, Biedermann, Ekpo, Garba, Mathieu, Midzi,
  Mwinzi, N'Goran, Raso, Assar{\'e}, {\em et~al.}]{lai2015spatial}
{\sc Lai, Y.-S., Biedermann, P., Ekpo, U.~F., Garba, A., Mathieu, E., Midzi,
  N., Mwinzi, P., N'Goran, E.~K., Raso, G., Assar{\'e}, R.~K. {\em et~al.}}
  (2015)
\newblock Spatial distribution of schistosomiasis and treatment needs in
  sub-{S}aharan {A}frica: a systematic review and geostatistical analysis.
\newblock {\em Lancet Infect.\ Dis.}, {\bf 15}, 927--940.

\bibitem[Lindgren {\em et~al.}(2011)Lindgren, Rue, \&
  Lindstr\"{o}m]{lindgren11}
{\sc Lindgren, F., Rue, H. \& Lindstr\"{o}m, J.} (2011)
\newblock An explicit link between {G}aussian fields and {G}aussian {M}arkov
  random fields: the stochastic partial differential equation approach (with
  discussion).
\newblock {\em J.\ R.\ Stat.\ Soc.\ Series B Stat.\ Methodol.}, {\bf 73},
  423--498.

\bibitem[Nochetto {\em et~al.}(2015)Nochetto, Ot{\'a}rola, \&
  Salgado]{nochetto2015}
{\sc Nochetto, R.~H., Ot{\'a}rola, E. \& Salgado, A.~J.} (2015)
\newblock A {PDE} approach to fractional diffusion in general domains: A priori
  error analysis.
\newblock {\em Found.\ Comp.\ Math.}, {\bf 15}, 733--791.

\bibitem[Nychka {\em et~al.}(2015)Nychka, Bandyopadhyay, Hammerling, Lindgren,
  \& Sain]{nychka2015multiresolution}
{\sc Nychka, D., Bandyopadhyay, S., Hammerling, D., Lindgren, F. \& Sain, S.}
  (2015)
\newblock A multiresolution {G}aussian process model for the analysis of large
  spatial datasets.
\newblock {\em J.\ Comput.\ Graph.\ Stat.}, {\bf 24}, 579--599.

\bibitem[Roop(2006)Roop]{roop2006}
{\sc Roop, J.~P.} (2006)
\newblock Computational aspects of {FEM} approximation of fractional advection
  dispersion equations on bounded domains in {${\Bbb R}^2$}.
\newblock {\em J.\ Comput.\ Appl.\ Math.}, {\bf 193}, 243--268.

\bibitem[Simpson(2008)Simpson]{simpson08phd}
{\sc Simpson, D.} (2008)
\newblock Krylov subspace methods for approximating functions of symmetric
  positive definite matrices with applications to applied statistics and
  anomalous diffusion.
\newblock {\em PhD thesis, Queensland University of Technology\/}.

\bibitem[Simpson {\em et~al.}(2012)Simpson, Lindgren, \& Rue]{simpson2012think}
{\sc Simpson, D., Lindgren, F. \& Rue, H.} (2012)
\newblock Think continuous: {M}arkovian {G}aussian models in spatial
  statistics.
\newblock {\em Spat.\ Stat.}, {\bf 1}, 16--29.

\bibitem[Stein(1999)Stein]{stein1999}
{\sc Stein, M.~L.} (1999)
\newblock {\em Interpolation of Spatial Data: Some Theory for Kriging\/}.
\newblock Springer Series in Statistics.
\newblock Springer New York.

\bibitem[Strang \& Fix(2008)Strang \& Fix]{strang2008}
{\sc Strang, G. \& Fix, G.} (2008)
\newblock {\em An Analysis of the Finite Element Method\/}.
\newblock Wellesley-Cambridge Press.

\bibitem[Sun {\em et~al.}(2012)Sun, Li, \& Genton]{sun2012geostatistics}
{\sc Sun, Y., Li, B. \& Genton, M.~G.} (2012)
\newblock Geostatistics for large datasets.
\newblock {\em Advances and challenges in space-time modelling of natural
  events\/}.
\newblock Springer, pp. 55--77.

\bibitem[Thomee(2007)Thomee]{thomee2007}
{\sc Thomee, V.} (2007)
\newblock {\em Galerkin Finite Element Methods for Parabolic Problems\/}.
\newblock Springer Series in Computational Mathematics.
\newblock Springer Berlin Heidelberg.

\bibitem[Wallin \& Bolin(2015)Wallin \& Bolin]{wallin15}
{\sc Wallin, J. \& Bolin, D.} (2015)
\newblock Geostatistical modelling using non-{G}aussian {M}at\'ern fields.
\newblock {\em Scand.\ J.\ Stat.}, {\bf 42}, 872--890.

\bibitem[Whittle(1954)Whittle]{whittle54}
{\sc Whittle, P.} (1954)
\newblock On stationary processes in the plane.
\newblock {\em Biometrika\/}, {\bf 41}, 434--449.

\bibitem[Whittle(1963)Whittle]{whittle63}
{\sc Whittle, P.} (1963)
\newblock Stochastic processes in several dimensions.
\newblock {\em Bull.\ Internat.\ Statist.\ Inst.}, {\bf 40}, 974--994.

\bibitem[Yosida(1995)Yosida]{yosida1995}
{\sc Yosida, K.} (1995)
\newblock {\em Functional Analysis\/}.
\newblock Classics in Mathematics.
\newblock Springer Berlin Heidelberg.

\bibitem[Zhang {\em et~al.}(2016)Zhang, Rozovskii, \& Karniadakis]{zhang2016}
{\sc Zhang, Z., Rozovskii, B. \& Karniadakis, G.~E.} (2016)
\newblock Strong and weak convergence order of finite element methods for
  stochastic {PDE}s with spatial white noise.
\newblock {\em Numerische Mathematik\/}, {\bf 134}, 61--89.

\end{thebibliography}

\end{document}